\documentclass[12pt, reqno]{amsart}

\author[J.~Connor]{Jeff Connor}
\address{Department of Mathematics, Ohio University, Athens, OH 45701, USA}
\email{connorj@ohio.edu}
%\urladdr{\url{https://sites.google.com/site/leonettipaolo/}} 

\author[P.~Leonetti]{Paolo Leonetti}
\address{Department of Statistics, Universit\`a Bocconi, via Roentgen 1, Milan 20136, Italy}
\email{leonetti.paolo@gmail.com}
%\urladdr{\url{https://sites.google.com/site/leonettipaolo/}} 

%
\keywords{Ideal and statistical convergence; summability; regular matrices; convergent sequences.}
\subjclass[2010]{Primary: 40A35, 40G15. Secondary: 54A20, 40A05.}

%40A35  	Ideal and statistical convergence
%40A05  	Convergence and divergence of series and sequences
%11B05    Density, gaps, topology
%40G15  	Summability methods using statistical convergence
%54A20  	Convergence in general topology (sequences, filters, limits, convergence spaces, etc.)
%26A03  	Foundations: limits and generalizations, elementary topology of the line

%\title{A characterization of ideal regular matrices}
\title{A characterization of $(\mathcal{I}, \mathcal{J})$-regular matrices}

\usepackage[T1]{fontenc}
\usepackage{amsmath}
\usepackage{amssymb}
\usepackage{amsthm}
\usepackage[left=2.5cm, right=2.5cm, top=3cm]{geometry}
\usepackage{hyperref}
\usepackage{fancyhdr}
\usepackage{enumitem}
\usepackage{comment}
\usepackage{nicefrac}
\usepackage{bm}
\usepackage{mathrsfs}
\usepackage{graphicx}
\usepackage[utf8]{inputenc}
\usepackage{cancel}
\usepackage{mathtools}
\usepackage{tikz}
\usetikzlibrary{matrix,arrows,positioning,automata,shapes,shadows,calc,through,intersections}

\makeatletter
\newcommand{\Spvek}[2][r]{%
  \gdef\@VORNE{1}
  \left(\hskip-\arraycolsep%
    \begin{array}{#1}\vekSp@lten{#2}\end{array}%
  \hskip-\arraycolsep\right)}

\def\vekSp@lten#1{\xvekSp@lten#1;vekL@stLine;}
\def\vekL@stLine{vekL@stLine}
\def\xvekSp@lten#1;{\def\temp{#1}%
  \ifx\temp\vekL@stLine
  \else
    \ifnum\@VORNE=1\gdef\@VORNE{0}
    \else\@arraycr\fi%
    #1%
    \expandafter\xvekSp@lten
  \fi}
\makeatother

\AtBeginDocument{%
   \def\MR#1{}
}

\newtheorem{thm}{Theorem}[section]
\newtheorem{cor}[thm]{Corollary}%[section]
\newtheorem{lem}[thm]{Lemma}

\theoremstyle{definition} 
%[section]
\let\olddefi\defi
\renewcommand{\defi}{\olddefi\normalfont}

\let\oldexample\example
\renewcommand{\example}{\oldexample\normalfont}

\let\oldrmk\rmk
\renewcommand{\rmk}{\oldrmk\normalfont}

%{\Gamma_x(\mathrm{Fin})}

\pagestyle{fancy}
\fancyhf{}
\fancyhead[CO]
{\textsc{A characterization of $(\mathcal{I}, \mathcal{J})$-regular matrices}}
\fancyhead[CE]
{\textsc{Jeff Connor} and \textsc{Paolo Leonetti}}
\fancyhead[RO,LE]{\thepage}

\setlength{\headheight}{12pt}

\hypersetup{
    pdftitle={A characterization of ideal regular matrices},
    pdfauthor={Jeff Connor and Paolo Leonetti},
    pdfmenubar=false,
    pdffitwindow=true,
    pdfstartview=FitH,
    colorlinks=true,
    linkcolor=blue,
    citecolor=green,
    urlcolor=cyan
}

\uchyph=0

\providecommand{\MR}[1]{}

\providecommand{\MR}{\relax\ifhmode\unskip\space\fi MR }

\providecommand{\href}[2]{#2}

\begin{document}

\maketitle
\thispagestyle{empty}

\begin{abstract}
Let $\mathcal{I},\mathcal{J}$ be two ideals on $\mathbf{N}$ which contain the family $\mathrm{Fin}$ of finite sets. We provide necessary and sufficient conditions on the entries of an infinite real matrix $A=(a_{n,k})$ which maps $\mathcal{I}$-convergent bounded sequences into $\mathcal{J}$-convergent bounded sequences and preserves the corresponding ideal limits. The well-known characterization of regular matrices due to Silverman--Toeplitz corresponds to the case $\mathcal{I}=\mathcal{J}=\mathrm{Fin}$. 

Lastly, we provide some applications to permutation and diagonal matrices, which extend several known results in the literature.
\end{abstract}

%%%%%%%%%%%%%%%%%%%%%%%%%%%%%%%%%%%%%%%%%%%%%%%%%%%%%%%%%%%%%%%%%%%%%%%%%%%%

\section{Introduction}

Let $\omega$ be the set of all real sequences indexed by the positive integers $\mathbf{N}$, and let $A=(a_{n,k})$ be an infinite real matrix. 
A sequence $x \in \omega$ is said to be $A$-summable if the sequence $Ax=\left(\sum_k a_{n,k}x_k\right)$ is well defined and it is convergent, i.e., $Ax \in c$. 
The matrix $A$ is called \textbf{regular} if every convergent sequence is $A$-summable and it preserves the limit, i.e., $\lim Ax=\lim x$ for all $x \in c$. 
A classical result of Silverman--Toeplitz characterizes 
the class of regular matrices: 
\begin{thm}\label{thm:silvermaintoeplitz}
A matrix 
$A$ is 
regular
if and only if\textup{:}
\begin{enumerate}[label={\rm (\textsc{S}\arabic{*})}]
\item \label{item:S1} $\sup_n \sum_k |a_{n,k}| < \infty$\textup{;}
\item \label{item:S3} $\lim_n a_{n,k}=0$ for each $k$\textup{;}
\item \label{item:S2} $\lim_n \sum_k a_{n,k}=1$\textup{.}
\end{enumerate}
\end{thm}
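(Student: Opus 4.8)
The plan is to prove the two implications separately. The ``if'' part is a one-paragraph direct estimate; the ``only if'' part splits into three assertions, of which only the uniform bound \ref{item:S1} requires a genuine idea.

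For sufficiency, assume \ref{item:S1}, \ref{item:S3}, and \ref{item:S2}, and fix $x\in c$ with $L:=\lim_k x_k$. Since $x$ is bounded, \ref{item:S1} makes each series $\sum_k a_{n,k}x_k$ absolutely convergent, so $Ax$ is well defined. Writing $x_k=L+y_k$ with $y_k\to 0$, I would split
\[
(Ax)_n=\sum_k a_{n,k}x_k = L\sum_k a_{n,k} + \sum_k a_{n,k}y_k,
\]
the first summand tending to $L$ by \ref{item:S2}. For the second, given $\varepsilon>0$ pick $N$ with $|y_k|\le\varepsilon$ for $k>N$ and bound
\[
\Bigl|\sum_k a_{n,k}y_k\Bigr| \le \sum_{k\le N}|a_{n,k}|\,|y_k| + \varepsilon\,\sup_m\sum_k|a_{m,k}| .
\]
The finite sum over $k\le N$ vanishes as $n\to\infty$ by \ref{item:S3}, so $\limsup_n|(Ax)_n-L|\le\varepsilon M$, where $M$ is the constant in \ref{item:S1}; letting $\varepsilon\downarrow 0$ yields $\lim_n(Ax)_n=L$.

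For necessity, assume $A$ is regular. Feeding $A$ the constant sequence $(1,1,\dots)\in c$ gives \ref{item:S2}, and feeding it each coordinate sequence $e_k\in c$ gives \ref{item:S3}. The only delicate point is \ref{item:S1}. One first checks that each row lies in $\ell_1$: the map $x\mapsto\sum_k a_{n,k}x_k$ is defined on all of $c$, hence on $c_0$, and a gliding-hump argument shows that a scalar sequence summing every element of $c_0$ must be absolutely summable; consequently this map is a bounded functional on $(c,\|\cdot\|_\infty)$ of norm $\sum_k|a_{n,k}|$. Since $Ax\in c$ is bounded for every $x\in c$, the family $\{x\mapsto(Ax)_n:n\in\mathbf{N}\}$ is pointwise bounded on the Banach space $c$, so the Banach--Steinhaus theorem delivers $\sup_n\sum_k|a_{n,k}|<\infty$.

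I expect this last step to be the main obstacle, namely upgrading ``$Ax$ bounded for each fixed $x$'' to a bound uniform in the row index. If one wants to avoid Baire category, it can be replaced by an explicit gliding hump: assuming $\sup_n\sum_k|a_{n,k}|=\infty$, build inductively rows $n_1<n_2<\cdots$ and blocks $(k_{j-1},k_j]$ and a null sequence $x$ equal to $\tfrac1j\,\mathrm{sgn}(a_{n_j,k})$ on the $j$-th block, using the already established \ref{item:S3} to make the coordinates $k\le k_{j-1}$ negligible and row-summability to make the tail $k>k_j$ negligible; then $|(Ax)_{n_j}|\to\infty$, contradicting $Ax\in c$. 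The remaining computations are routine.
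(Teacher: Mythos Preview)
Your proof is correct. Note, however, that the paper does not give a standalone proof of Theorem~\ref{thm:silvermaintoeplitz}: it is stated as the classical Silverman--Toeplitz theorem and then recovered as the special case $\mathcal{I}=\mathcal{J}=\mathrm{Fin}$ of Theorems~\ref{thm:mainsilvermantoeplitz} and~\ref{thm:positive}. Tracing that route, your sufficiency argument is essentially the same as the paper's Lemma~\ref{thm:mainc0c0} (split off the limit, bound the tail by the uniform row norm and the head by the columnwise condition), combined with Lemma~\ref{lem:c0cc}. The genuine difference is on the necessity side: the paper obtains \ref{item:S1} by citing Theorem~\ref{lem:boundedtobounded} (i.e., the characterization $A\in(c_0,\ell_\infty)\Leftrightarrow$ \ref{item:T1} from \cite{MR1817226}), whereas you supply a self-contained argument---first showing each row lies in $\ell_1$ and then invoking Banach--Steinhaus, with an explicit gliding-hump alternative. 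Your approach buys self-containment and makes transparent where the uniform bound comes from; the paper's approach keeps the classical result as a black box and focuses effort on the ideal-theoretic generalization.
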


The aim of this work is to generalize Theorem \ref{thm:silvermaintoeplitz} in the context of ideal convergence. 

Recall that an ideal $\mathcal{I}\subseteq \mathcal{P}(\mathbf{N})$ is a family of subsets of $\mathbf{N}$ closed under taking finite unions and subsets. 
Unless otherwise stated, it is also assumed that $\mathcal{I}$ is admissible, i.e., it contains the family of finite sets $\mathrm{Fin}$ and $\mathcal{I}\neq \mathcal{P}(\mathbf{N})$. Let $\mathcal{I}^\star=\{A\subseteq \mathbf{N}: A^c \in \mathcal{I}\}$ be its dual filter. 
An important example of ideal is the family of asymptotic density zero sets, that is, 
$$%\begin{equation}\label{eq:defZ}
\textstyle \mathcal{Z}=\left\{S\subseteq \mathbf{N}: \lim_{n\to \infty} \frac{1}{n}|S\cap [1,n]|=0\right\}.
$$%\end{equation}

A sequence $x\in \omega$ is $\mathcal{I}$-convergent to $\eta$, shortened as $\mathcal{I}\text{-}\lim x=\eta$, if $\{n \in \mathbf{N}: |x_n-\eta|>\varepsilon\} \in \mathcal{I}$ for all $\varepsilon>0$; 
$\mathcal{Z}$-convergence is often referred to as statistical convergence in the summability theory literature. 
We let $c(\mathcal{I})$ be the vector space of $\mathcal{I}$-convergent sequences and $c_0(\mathcal{I})$ be its subspace of sequences with $\mathcal{I}$-limit $0$. 
Structural properties of the set of bounded $\mathcal{I}$-convergent sequences $c(\mathcal{I}) \cap \ell_\infty$ and its subspace $c_0(\mathcal{I}) \cap \ell_\infty$ have been recently studied in the literature, sometimes providing answers to longstanding questions, see e.g. \cite{MR2735533, 
MR3883309, MR3836186, Miek}. 

Given sequence spaces $X,Y\subseteq \omega$, we let $(X,Y)$ be the set of infinite real matrices $A$ such that $Ax$ is well defined and belongs to $Y$ for all $x \in X$. Accordingly, a matrix $A$ is regular if $A \in (c,c)$ and preserves the (ordinary) limits. 
The relationship between ideal convergence and matrix summability has been recently studied in \cite{MR3863065}. 

With these premises, we can now state the main definition of this work:   
Given ideals $\mathcal{I}, \mathcal{J}$ on $\mathbf{N}$, a matrix $A$ is said to be $\bm{(\mathcal{I}, \mathcal{J})}$\textbf{-regular} if $A \in (c(\mathcal{I}) \cap \ell_\infty, c(\mathcal{J}) \cap \ell_\infty)$ and 
$$
\forall x \in c(\mathcal{I}) \cap \ell_\infty,\quad 
\mathcal{I}\text{-}\lim x=\mathcal{J}\text{-}\lim Ax.
$$

Considering that $c=c(\mathrm{Fin})\cap \ell_\infty$, Theorem \ref{thm:silvermaintoeplitz} amounts to a characterize the class of $(\mathrm{Fin},\mathrm{Fin})$-regular matrices. 
%
%Our main result follows:
\begin{thm}\label{thm:mainsilvermantoeplitz} 
Let $\mathcal{I},\mathcal{J}$ be ideals on $\mathbf{N}$. Then a matrix $A$ is $(\mathcal{I}, \mathcal{J})$-regular provided that 
\begin{enumerate}[label={\rm (\textsc{T}\arabic{*})}]
\item \label{item:T1} $\sup_n \sum_k |a_{n,k}| < \infty$\textup{;}
\item \label{item:T2} $\mathcal{J}\text{-}\lim_n \sum_k a_{n,k}=1$\textup{;}
\item \label{item:T3} $\mathcal{J}\text{-}\lim_n \sum_{k \in E} |a_{n,k}|=0$ for all $E \in \mathcal{I}$\textup{.}
\end{enumerate}
Conversely, if $A$ is $(\mathcal{I}, \mathcal{J})$-regular, then it satisfies \ref{item:T1} and \ref{item:T2}.
\end{thm}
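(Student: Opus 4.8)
The plan is to prove the two implications separately, following the scheme of the classical proof of Theorem~\ref{thm:silvermaintoeplitz}. For the sufficiency, suppose that \ref{item:T1}--\ref{item:T3} hold, fix $x\in c(\mathcal I)\cap\ell_\infty$ with $\mathcal I\text{-}\lim x=\eta$, and let $M\ge\sup_k|x_k|$ and $C\ge\sup_n\sum_k|a_{n,k}|$ (the latter finite by \ref{item:T1}). Then $\sum_k|a_{n,k}x_k|\le CM$ for every $n$, so $Ax$ is well defined and bounded. Writing $z:=x-\eta\mathbf 1$, we have $z\in c_0(\mathcal I)\cap\ell_\infty$ and $(Ax)_n=(Az)_n+\eta\sum_k a_{n,k}$; since the second term is $\mathcal J$-convergent to $\eta$ by \ref{item:T2}, it suffices to show $\mathcal J\text{-}\lim Az=0$. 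In other words, the whole sufficiency part reduces to checking that $A$ carries $c_0(\mathcal I)\cap\ell_\infty$ into $c_0(\mathcal J)\cap\ell_\infty$.

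For the latter, let $z\in c_0(\mathcal I)\cap\ell_\infty$ with $0<M\ge\sup_k|z_k|$, fix $\varepsilon>0$, and set $E:=\{k:|z_k|>\varepsilon\}$, which belongs to $\mathcal I$. Splitting the series over $E$ and its complement gives
\[
|(Az)_n|\le M\sum_{k\in E}|a_{n,k}|+\varepsilon\sum_{k\notin E}|a_{n,k}|\le M\sum_{k\in E}|a_{n,k}|+\varepsilon C .
\]
Hence, for any threshold $\theta>0$, choosing $\varepsilon:=\theta/(2C)$ we obtain the inclusion $\{n:|(Az)_n|>\theta\}\subseteq\{n:\sum_{k\in E}|a_{n,k}|>\theta/(2M)\}$, and the right-hand set lies in $\mathcal J$ by \ref{item:T3}. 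Since $\theta>0$ was arbitrary, $\mathcal J\text{-}\lim Az=0$, which completes the proof that $A$ is $(\mathcal I,\mathcal J)$-regular.

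For the converse, assume $A$ is $(\mathcal I,\mathcal J)$-regular. Applying the definition to the constant sequence $\mathbf 1\in c\subseteq c(\mathcal I)\cap\ell_\infty$ (whose $\mathcal I$-limit is $1$) immediately yields \ref{item:T2}; in particular this records that each row sum $\sum_k a_{n,k}$ converges. For \ref{item:T1}, observe that $c_0\subseteq c_0(\mathcal I)\cap\ell_\infty$ since $\mathrm{Fin}\subseteq\mathcal I$, so $A$ maps $c_0$ into $c(\mathcal J)\cap\ell_\infty\subseteq\ell_\infty$; thus $\sum_k a_{n,k}x_k$ converges for every $x\in c_0$, which forces each row $(a_{n,k})_k$ to belong to $\ell_1=(c_0)^\star$. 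Consequently the functionals $\varphi_n\colon c_0\to\mathbf R$, $\varphi_n(x):=\sum_k a_{n,k}x_k$, are bounded with $\|\varphi_n\|=\sum_k|a_{n,k}|$ (the lower bound coming from finitely supported unimodular test sequences), and $\sup_n|\varphi_n(x)|=\|Ax\|_\infty<\infty$ for each $x\in c_0$. The uniform boundedness principle then gives $\sup_n\sum_k|a_{n,k}|=\sup_n\|\varphi_n\|<\infty$, which is \ref{item:T1}.

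I do not expect a serious obstacle here: the argument is essentially the classical one, and the only points requiring care are the reduction to the case $\eta=0$ (which is precisely where \ref{item:T2} is used to absorb the row sums) and the verification that the test sequences employed in the converse---namely $\mathbf 1$, the finitely supported sequences, and arbitrary elements of $c_0$---genuinely lie in the domain $c(\mathcal I)\cap\ell_\infty$, which is guaranteed by admissibility of $\mathcal I$. It is worth emphasizing that the converse asserts only \ref{item:T1} and \ref{item:T2}: condition \ref{item:T3} is in general \emph{not} necessary for $(\mathcal I,\mathcal J)$-regularity, so no gliding-hump-type construction against \ref{item:T3} is needed.
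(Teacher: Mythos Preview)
Your proof is correct and follows essentially the same route as the paper. The paper organizes the argument via two auxiliary lemmas---Lemma~\ref{thm:mainc0c0} (the $\varepsilon$-splitting showing that \ref{item:T1}$+$\ref{item:T3} imply $A\in(c_0(\mathcal I)\cap\ell_\infty,c_0(\mathcal J)\cap\ell_\infty)$, and conversely that membership in this class implies \ref{item:T1}) and Lemma~\ref{lem:c0cc} (the reduction to $\eta=0$ via \ref{item:T2})---and for the converse cites the known characterization $(c_0,\ell_\infty)\Leftrightarrow$\ref{item:T1} (Theorem~\ref{lem:boundedtobounded}), whereas you carry out the same computations inline and recover \ref{item:T1} directly from the uniform boundedness principle.
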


A nontrivial problem is to characterize when conditions \ref{item:T1}--\ref{item:T3}, which depend only on the entries of $A$, provide a full characterization of $(\mathcal{I}, \mathcal{J})$-regularity, that is, equivalently, when an $(\mathcal{I}, \mathcal{J})$-regular matrix satisfies condition \ref{item:T3}.

On the positive side, we have the following result.
\begin{thm}\label{thm:positive}
Let $\mathcal{I}, \mathcal{J}$ be ideals on $\mathbf{N}$ such that $A$ is nonnegative or $\mathcal{I}=\mathrm{Fin}$ or $\mathcal{J}=\mathrm{Fin}$. 
Then $A$ is $(\mathcal{I}, \mathcal{J})$-regular if and only if it satisfies \ref{item:T1}--\ref{item:T3}. 
\end{thm}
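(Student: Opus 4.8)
The plan is to prove Theorem~\ref{thm:positive} by establishing the missing direction: if $A$ is $(\mathcal{I},\mathcal{J})$-regular, then under any of the three hypotheses it satisfies \ref{item:T3}, since the converse implication is already covered by Theorem~\ref{thm:mainsilvermantoeplitz}. So assume $A$ is $(\mathcal{I},\mathcal{J})$-regular; by Theorem~\ref{thm:mainsilvermantoeplitz} it already satisfies \ref{item:T1} and \ref{item:T2}, and I only need \ref{item:T3}, i.e.\ $\mathcal{J}\text{-}\lim_n \sum_{k\in E}|a_{n,k}| = 0$ for every $E \in \mathcal{I}$. Fix such an $E$.

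The case $\mathcal{I}=\mathrm{Fin}$ is immediate: then $E$ is finite, and since $A$ is regular (as $(\mathrm{Fin},\mathcal{J})$-regular applied to the indicator-type sequences $e_k$, or directly via \ref{item:S3}) we have $\mathcal{J}\text{-}\lim_n a_{n,k}=0$ for each $k$; summing the finitely many $k\in E$ and using that $c(\mathcal{J})$ is a vector space gives \ref{item:T3}. The case $A$ nonnegative is the central one: here $\sum_{k\in E}|a_{n,k}| = \sum_{k\in E}a_{n,k}$, and I want to feed $A$ the bounded sequence $x = \mathbf{1}_E$, the indicator of $E$. Since $E\in\mathcal{I}$, we have $\mathcal{I}\text{-}\lim x = 0$, so $x \in c(\mathcal{I})\cap\ell_\infty$, hence $\mathcal{J}\text{-}\lim_n (Ax)_n = 0$; but $(Ax)_n = \sum_{k\in E}a_{n,k} = \sum_{k\in E}|a_{n,k}|$, which is exactly \ref{item:T3}. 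The case $\mathcal{J}=\mathrm{Fin}$ requires a genuine argument because indicator sequences do not control the absolute values when $A$ has signs: here $(\mathcal{I},\mathrm{Fin})$-regularity means $A$ maps every bounded $\mathcal{I}$-convergent sequence to an ordinarily convergent sequence preserving limits. In particular $\lim_n (Ax)_n = 0$ for every bounded $x$ with $\mathcal{I}\text{-}\lim x = 0$; I would argue that this forces $\lim_n \sum_{k\in E}|a_{n,k}| = 0$ by the standard gliding-hump / sliding-bump technique: if not, pass to a subsequence $(n_j)$ along which $\sum_{k\in E}|a_{n_j,k}| \ge \delta > 0$, use \ref{item:T1} together with $\lim_n a_{n,k}=0$ (coordinatewise, valid since $E$ may be infinite but tails of the row sums are uniformly small by \ref{item:T1}) to extract blocks $E_j\subseteq E$ on which the mass is concentrated, and choose $x$ supported on $E$ with $x_k = \operatorname{sign}(a_{n_j,k})$ on the $j$-th block and $0$ elsewhere, so that $(Ax)_{n_j}$ stays bounded away from $0$ while $x$ is a bounded sequence that is $\mathcal{I}$-null (because $\operatorname{supp} x\subseteq E\in\mathcal{I}$), contradicting $(\mathcal{I},\mathrm{Fin})$-regularity.

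I expect the nonnegative and $\mathcal{I}=\mathrm{Fin}$ cases to be essentially one line each as above; the main obstacle is the $\mathcal{J}=\mathrm{Fin}$ case, and within it the gliding-hump construction: one must verify that the tails $\sum_{k\notin \bigcup_{i\le j}E_i}|a_{n_j,k}|$ are controlled and that the blocks $E_j$ can be chosen disjoint while retaining mass $\ge \delta/2$, say, on each. This is routine but needs \ref{item:T1} (uniform $\ell_1$ bound on rows) to make the tail estimates and the coordinatewise limit $\lim_n a_{n,k}=0$ to push past the already-used columns; both are available since any $(\mathcal{I},\mathrm{Fin})$-regular matrix is in particular regular, hence satisfies \ref{item:S1}--\ref{item:S3}. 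Once \ref{item:T3} is secured in each case, combining with Theorem~\ref{thm:mainsilvermantoeplitz} yields the stated equivalence, completing the proof.
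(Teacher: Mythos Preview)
Your proposal is correct and follows essentially the same approach as the paper: both dispose of the nonnegative case via $\mathbf{1}_E$ and the $\mathcal{I}=\mathrm{Fin}$ case via finite sums of the columnwise limits, and both handle the $\mathcal{J}=\mathrm{Fin}$ case with a gliding-hump construction of a $\{-1,0,1\}$-valued sequence supported on $E$. The only cosmetic difference is that the paper fixes an accumulation point $\kappa$ of $(\sum_{k\in E}|a_{n,k}|)_n$ and works with the two-sided window $(\tfrac{7}{8}\kappa,\tfrac{9}{8}\kappa)$ to get explicit constants for the $\alpha_n,\beta_n,\gamma_n$ estimates, whereas your sketch uses a one-sided lower bound $\delta$ and controls the future block via the $\ell_1$-tail of row $n_j$; both bookkeepings work.
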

\noindent This provides a generalization to \cite[Theorem 3.1]{MR3511151}, which studies the case where $A$ is nonnegative and, in addition, it belongs to $(\ell_\infty, \ell_\infty)$, 
%maps bounded sequences into bounded sequences., 
cf. Theorem \ref{lem:boundedtobounded} below. If $A$ is nonnegative, note that conditions \ref{item:T2}--\ref{item:T3} may be simplied together into
\begin{enumerate}[label={\rm (\textsc{T4})}]
\item \label{item:T4} $\mathcal{J}\text{-}\lim_n \sum_{k \in I} a_{n,k}=1$ for all $I \in \mathcal{I}^\star$.
\end{enumerate}

On the negative side, it turns out that an $(\mathcal{I}, \mathcal{J})$-regular matrix does not necessarily satisfy condition \ref{item:T3}. To the best of authors' knowledge, this is the first result of this type. 
\begin{thm}\label{thm:negativemain}
Let $\mathcal{I}$ be an ideal on $\mathbf{N}$ such that $\mathrm{Fin}\subsetneq \mathcal{I}\subseteq \mathcal{Z}$. Then there exists an $(\mathcal{I}, \mathcal{Z})$-regular matrix $A$ which does not satisfy \ref{item:T3}.
\end{thm}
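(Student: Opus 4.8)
The goal is to construct, for any ideal $\mathcal{I}$ strictly between $\mathrm{Fin}$ and $\mathcal{Z}$, an $(\mathcal{I},\mathcal{Z})$-regular matrix $A$ which fails \ref{item:T3}, i.e. there is some $E\in\mathcal{I}$ with $\mathcal{Z}\text{-}\lim_n\sum_{k\in E}|a_{n,k}|\ne 0$. Since we already know from Theorem~\ref{thm:mainsilvermantoeplitz} that an $(\mathcal{I},\mathcal{Z})$-regular matrix must satisfy \ref{item:T1} and \ref{item:T2}, the matrix we build will automatically have bounded row-sums of absolute values and $\mathcal{Z}\text{-}\lim_n\sum_k a_{n,k}=1$; the subtlety is to get genuine $(\mathcal{I},\mathcal{Z})$-regularity (a statement about the action on all bounded $\mathcal{I}$-convergent sequences) while sabotaging \ref{item:T3} on a single cleverly chosen set $E\in\mathcal{I}\setminus\mathrm{Fin}$.

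\textbf{Step 1: Choice of $E$ and of a witnessing structure.} Pick any infinite $E=\{e_1<e_2<\cdots\}\in\mathcal{I}$. Since $\mathcal{I}\subseteq\mathcal{Z}$, $E$ has asymptotic density zero, which gives us room. The idea is to let the matrix, on a density-zero set of rows $R\subseteq\mathbf{N}$, ``see'' only the coordinates in $E$ with large weight, and on the complementary density-one set of rows behave like the identity (or a Silverman--Toeplitz-regular averaging). On rows $n\notin R$ we set $a_{n,n}=1$ and all other entries $0$ (or, more robustly, a Cesàro-type average over a window); on rows $n\in R$ we put weight on coordinates of $E$, e.g. $a_{n,k}=1$ for $k=e_{f(n)}$ and $0$ otherwise, for a suitable injection $f\colon R\to\mathbf{N}$. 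Then $\sum_{k\in E}|a_{n,k}|=1$ for all $n\in R$, so if $R$ is chosen to \emph{not} be $\mathcal{Z}$-null — in fact we will need $R$ density-zero for regularity but with $R\notin\mathcal{Z}$ impossible — so instead we only need $\mathcal{Z}\text{-}\lim_n\sum_{k\in E}|a_{n,k}|\ne 0$, which fails exactly when $R\notin\mathcal{Z}$. This forces a tension: $R$ cannot have density zero. The resolution is to \emph{not} make rows in $R$ trivial but to split each such row's mass: put $a_{n,n}=1-\delta_n$ on the diagonal and a small bump $\delta_n$ on a coordinate in $E$, with $\delta_n\to 0$ slowly enough (i.e. not $\mathcal{Z}$-convergent to $0$) that $\sum_{k\in E}|a_{n,k}|=\delta_n$ does not $\mathcal{Z}$-converge to $0$, yet small enough that the perturbation does not disturb $\mathcal{I}$-limits.

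\textbf{Step 2: Verify $(\mathcal{I},\mathcal{Z})$-regularity.} Write $A=D+B$, where $D$ is the diagonal matrix $\mathrm{diag}(1-\delta_n)$ (with $\delta_n=0$ off a chosen index set) and $B$ is the ``bump'' matrix supported on columns in $E$ with $\sum_k|b_{n,k}|=\delta_n$. For $x\in c(\mathcal{I})\cap\ell_\infty$ with $\mathcal{I}\text{-}\lim x=\eta$: the identity matrix is trivially $(\mathcal{I},\mathcal{I})$-regular hence $(\mathcal{I},\mathcal{Z})$-regular since $\mathcal{I}\subseteq\mathcal{Z}$ (using that $\mathcal{I}$-convergence implies $\mathcal{Z}$-convergence for $\mathcal{I}\subseteq\mathcal{Z}$); the term $D x$ differs from $x$ by $\delta_n x_n$, which is bounded by $\|x\|_\infty\delta_n$, and if $\delta_n\to 0$ along a set in $\mathcal{Z}^\star$ this contributes $0$ to the $\mathcal{Z}$-limit. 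For the bump term $Bx$: here is where the choice of $E\in\mathcal{I}$ is essential. One approach is to make $B$ itself satisfy \ref{item:T1} (trivially, $\sup_n\delta_n\le 1$) and to arrange that $Bx$ is $\mathcal{Z}$-convergent to $0$ for every bounded $x$ — for instance if the columns used by $B$ are spread out so that each column index $e_j$ is used by at most one row, making $B$ a $0$-$1$-scaled matrix with at most one nonzero entry per row and column; then $(Bx)_n=\delta_n x_{k(n)}$ which is again $O(\delta_n)$ and $\mathcal{Z}$-converges to $0$ if $\delta_n$ is eventually small off a density-zero set. The delicate point: we need $\delta_n$ to \emph{not} $\mathcal{Z}$-converge to $0$ (to break \ref{item:T3}) yet $\delta_n\|x\|_\infty$ to $\mathcal{Z}$-converge to $0$ — contradiction unless $x$ is unbounded, which it isn't. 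So the bump cannot be a simple scalar multiple of a coordinate of $x$; instead we must use \emph{cancellation}: put on row $n\in R$ two entries in $E$, say $+\delta_n$ at $e_{j(n)}$ and $-\delta_n$ at $e_{j(n)+1}$, so that $\sum_{k\in E}|a_{n,k}|=2\delta_n\not\xrightarrow{\mathcal{Z}}0$ but $(Bx)_n=\delta_n(x_{e_{j(n)}}-x_{e_{j(n)+1}})$, and since $E\in\mathcal{I}$ and $x$ is $\mathcal{I}$-convergent, for every $\varepsilon$ the set $\{n: |x_{e_{j(n)}}-x_{e_{j(n)+1}}|>\varepsilon\}$ is... here we need both indices in the ``$\mathcal{I}$-good'' set, i.e. $\{m\in\mathbf{N}: |x_m-\eta|>\varepsilon/2\}\in\mathcal{I}$, and pulling back along $j$, the rows where the difference is large lie in an $\mathcal{I}$-small, hence $\mathcal{Z}$-small, set of \emph{column indices}; choosing $j(n)$ so that distinct $n$ give distinct $j(n)$ and the preimage under $n\mapsto e_{j(n)}$ of any set in $\mathcal{I}$ is in $\mathcal{Z}$ (possible since we have freedom in $j$ and $\mathcal{I}\subseteq\mathcal{Z}$) yields $(Bx)_n\xrightarrow{\mathcal{Z}}0$. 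Then $\mathcal{Z}\text{-}\lim Ax=\mathcal{Z}\text{-}\lim x=\eta=\mathcal{I}\text{-}\lim x$, and conditions \ref{item:T1}, \ref{item:T2} hold while \ref{item:T3} fails at $E$.

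\textbf{Main obstacle.} The crux, and the reason the hypothesis $\mathrm{Fin}\subsetneq\mathcal{I}$ is used, is engineering the ``bump'' so that it simultaneously (a) has absolute row-sums on $E$ bounded away from $0$ on a non-$\mathcal{Z}$-null set of rows, and (b) acts as the zero operator in the $\mathcal{Z}$-limit on \emph{every} bounded $\mathcal{I}$-convergent sequence. Cancellation between two $E$-coordinates handles bounded sequences, but one must check it for all of $c(\mathcal{I})\cap\ell_\infty$, not just for $c_0(\mathcal{I})\cap\ell_\infty$ or for $c$; the reduction to $c_0$ is fine because $Ax$ on the constant sequence $\mathbf{1}$ is controlled by \ref{item:T2}, so $x\mapsto x-\eta\mathbf{1}$ reduces to $c_0(\mathcal{I})\cap\ell_\infty$, on which $\mathcal{I}\text{-}\lim=0$ and the difference $x_{e_{j(n)}}-x_{e_{j(n)+1}}$ is small off an $\mathcal{I}$-small set of column indices. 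Verifying that the pullback of an $\mathcal{I}$-set under the indexing map stays in $\mathcal{Z}$ — which is where we genuinely exploit $\mathcal{I}\subseteq\mathcal{Z}$ and the freedom to choose $j$ to grow fast — is the step requiring the most care; everything else is bookkeeping on \ref{item:T1} and \ref{item:T2}.
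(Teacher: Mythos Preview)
Your plan contains a genuine gap at exactly the point you flag as the ``main obstacle''. The two-entry cancellation $(+\delta_n,-\delta_n)$ on columns $e_{j(n)},e_{j(n)+1}$ cannot work, because the sequence $x=\bm{1}_F$ with $F=\{e_{2i-1}:i\in\mathbf{N}\}\subseteq E$ lies in $c_0(\mathcal I)\cap\ell_\infty$ (since $F\subseteq E\in\mathcal I$) and satisfies $|(Bx)_n|=\delta_n\,|x_{e_{j(n)}}-x_{e_{j(n)+1}}|=\delta_n$ for every $n$. Thus $Bx\xrightarrow{\mathcal Z}0$ would force $\delta_n\xrightarrow{\mathcal Z}0$, contradicting your requirement that $\sum_{k\in E}|a_{n,k}|=2\delta_n$ not be $\mathcal Z$-null. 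Your proposed rescue---``choose $j$ so that the preimage under $n\mapsto e_{j(n)}$ of any set in $\mathcal I$ lies in $\mathcal Z$''---is impossible on its face: the image of that map is contained in $E\in\mathcal I$, so its preimage is all of $\mathbf N$. More generally, any bump with a \emph{fixed} finite sign pattern per row is defeated by the characteristic function of a suitable subset of $E$, which always lies in $c_0(\mathcal I)\cap\ell_\infty$.

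The paper's construction avoids this by a genuinely different mechanism. On each block of rows $R_m$ (of size $2^{\lambda_m}\sim m!$) it places $\lambda_m$ entries of magnitude $1/m$ on a block $C_m\subseteq I$ of columns, and lets the sign vectors range over \emph{all} $2^{\lambda_m}$ patterns. Then for any bounded $x$, the weak law of large numbers shows that the fraction of rows in $R_m$ on which $|\frac{1}{m}\sum_{k\in C_m}\mathrm{sgn}(a_{n,k})x_k|>\varepsilon$ tends to $0$, so the bump matrix actually lies in $(\ell_\infty,c_0(\mathcal Z)\cap\ell_\infty)$---a much stronger conclusion than you need, and one that your pairwise cancellation cannot deliver. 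The set $R=\bigcup_m R_m$ is arranged to have positive upper density, killing \ref{item:T3}, and adding the identity restores \ref{item:T2}. The key idea you are missing is that cancellation must be achieved \emph{statistically across rows} (via varying sign patterns and a concentration inequality), not algebraically within a single row.
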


A remarkable example of ideal $\mathcal{I}$ which satisfies the hypotheses of Theorem \ref{thm:negativemain} is the ideal of uniform density zero sets $\{S\subseteq \mathbf{N}: \lim_{n\to \infty} \max_{k\ge 0}\frac{1}{n}|S\cap [k+1,k+n]|=0\}$, cf. \cite{MR2835960}.

Lastly, note that Theorem \ref{thm:mainsilvermantoeplitz} is also analogous to \cite[Theorem 5.1]{MR2966607}, which studies the case  where both $\mathcal{I}$ and $\mathcal{J}$ are assumed to be 
ideals generated by nonnegative regular matrices. 
However, it is known by \cite[Proposition 13]{MR3405547} that, if we regard ideals as subsets of the Cantor space $\{0,1\}^{\mathbf{N}}$ endowed with the product topology, then the ideal 
$$
\textstyle 
\mathcal{I}_A:=\{S\subseteq \mathbf{N}: \lim_n \sum_{k \in S}a_{n,k}=0\}
$$
generated by a regular matrix $A$ is necessarily an $F_{\sigma\delta}$-ideal, 
and hence \cite[Theorem 5.1]{MR2966607} only applies to a restricted class of ideals. 

Several other applications for permutations and diagonal matrices have been studied in the literature \cite{Connor2020, 
MR1955734, 
MR2003000}, 
see Section \ref{sec:applications} below. 

Related results which characterize matrix classes of the type $(c(\mathcal{I}) \cap \ell_\infty, c(\mathcal{J}) \cap \ell_\infty)$ (hence, without preserving necessarily their ideal limits) can be found e.g. in \cite{MR1245744, MR2607117, MR0364938, MR1433948}. 

\section{Main Proofs}\label{sec:mainproof}

Hereafter, $\ell_\infty$ and any of its subspaces are endowed with the supremum norm.
The following result is well known, see e.g. 
%\cite[Chapter 7]{MR0390692} 
%and 
\cite[Theorem 2.3.5]{MR1817226}:
\begin{thm}\label{lem:boundedtobounded}
$A \in (\ell_\infty, \ell_\infty)$ if and only if 
$A \in (c_0, \ell_\infty)$ if and only if 
$A$ satisfies \ref{item:T1}.
\end{thm}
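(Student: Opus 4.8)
The plan is to prove the classical equivalences $A \in (\ell_\infty,\ell_\infty) \iff A \in (c_0,\ell_\infty) \iff \textbf{(\textsc{T}1)}$. The implication $A \in (\ell_\infty,\ell_\infty) \Rightarrow A \in (c_0,\ell_\infty)$ is immediate since $c_0 \subseteq \ell_\infty$, so the content lies in the cycle $\textbf{(\textsc{T}1)} \Rightarrow A \in (\ell_\infty,\ell_\infty)$ and $A \in (c_0,\ell_\infty) \Rightarrow \textbf{(\textsc{T}1)}$.

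For $\textbf{(\textsc{T}1)} \Rightarrow A \in (\ell_\infty,\ell_\infty)$, first I would observe that $\textbf{(\textsc{T}1)}$ forces each row $(a_{n,k})_k$ to be absolutely summable, so for any $x \in \ell_\infty$ the series $\sum_k a_{n,k}x_k$ converges absolutely with $|\sum_k a_{n,k}x_k| \le \|x\|_\infty \sum_k |a_{n,k}| \le \|x\|_\infty \sup_m \sum_k |a_{m,k}|$; taking the supremum over $n$ shows $Ax \in \ell_\infty$ with $\|Ax\|_\infty \le \|x\|_\infty \sup_n \sum_k |a_{n,k}|$.

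For $A \in (c_0,\ell_\infty) \Rightarrow \textbf{(\textsc{T}1)}$, I would argue by contradiction using a gliding hump (sliding bump) construction. Suppose $\sup_n \sum_k |a_{n,k}| = \infty$. Since $A$ maps the standard unit vectors $e_k \in c_0$ into $\ell_\infty$, each column is bounded; this plus the sliding hump lets one recursively pick a strictly increasing sequence of row indices $n_1 < n_2 < \cdots$ and column indices $0 = k_0 < k_1 < k_2 < \cdots$ such that, on the $j$-th row $n_j$, the tail mass $\sum_{k > k_j} |a_{n_j,k}|$ is negligible while the block mass $\sum_{k_{j-1} < k \le k_j} |a_{n_j,k}|$ is large (say exceeds $j$ plus the contributions from the earlier blocks and the tail). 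Then define $x \in c_0$ by setting $x_k = \mathrm{sgn}(a_{n_j,k})/j$ for $k_{j-1} < k \le k_j$; this is a null sequence, but $|(Ax)_{n_j}| \to \infty$, contradicting $Ax \in \ell_\infty$.

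The main obstacle is the bookkeeping in the gliding hump step: one must choose the blocks so that, simultaneously, the previously fixed finitely many columns contribute a bounded amount (controlled via the column boundedness coming from $Ae_k \in \ell_\infty$), the chosen block dominates, and the remaining tail can be made small (using absolute convergence of that particular row). Since this is a standard functional-analytic argument — indeed it is exactly the proof of \cite[Theorem 2.3.5]{MR1817226} — I would either reproduce it compactly or simply cite the reference, as the excerpt already does.
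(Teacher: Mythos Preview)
Your proposal is correct and matches the paper's treatment: the paper gives no proof at all for this result, simply citing \cite[Theorem 2.3.5]{MR1817226}, exactly as you anticipate in your final paragraph. Your gliding-hump sketch (together with the easy directions) is the standard argument behind that reference, so there is nothing to add.
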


The next result which will be the key tool in the proof 
of our main result. 
Related results can be found in \cite[Proposition 6.3]{MR3671266} (for the case of $F_\sigma$-ideals $\mathcal{I}, \mathcal{J}$) and in \cite[Theorem 3.7]{MR954458} (for the case $\mathcal{I}=\mathcal{Z}$ and $\mathcal{J}=\mathrm{Fin}$).
\begin{lem}\label{thm:mainc0c0}
Let $\mathcal{I},\mathcal{J}$ be ideals on $\mathbf{N}$. 
If a matrix $A$ 
satisfies \ref{item:T1} and \ref{item:T3} then $A \in (c_0(\mathcal{I}) \cap \ell_\infty, c_0(\mathcal{J})\cap \ell_\infty)$, which in turn implies that $A$ satisfies \ref{item:T1}. 
\end{lem}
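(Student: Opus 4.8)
The plan is to prove the two implications separately. The second implication is the easier one: if $A \in (c_0(\mathcal{I}) \cap \ell_\infty, c_0(\mathcal{J}) \cap \ell_\infty)$, then in particular $A$ maps the smaller space $c_0 = c_0(\mathrm{Fin}) \cap \ell_\infty$ into $c_0(\mathcal{J}) \cap \ell_\infty \subseteq \ell_\infty$, so $A \in (c_0, \ell_\infty)$, and Theorem \ref{lem:boundedtobounded} immediately gives \ref{item:T1}. So the real content is the first implication: assuming \ref{item:T1} and \ref{item:T3}, show that for every bounded $x$ with $\mathcal{I}\text{-}\lim x = 0$ the sequence $Ax$ is well defined, bounded, and satisfies $\mathcal{J}\text{-}\lim Ax = 0$.

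First I would note that well-definedness and boundedness of $Ax$ are free: by Theorem \ref{lem:boundedtobounded}, condition \ref{item:T1} alone gives $A \in (\ell_\infty, \ell_\infty)$, so $Ax \in \ell_\infty$ for every $x \in \ell_\infty$. The heart of the matter is the $\mathcal{J}$-limit. Fix $x \in c_0(\mathcal{I}) \cap \ell_\infty$ with $\|x\|_\infty \le M$, and fix $\varepsilon > 0$. Let $c := \sup_n \sum_k |a_{n,k}| < \infty$. Choose $\delta > 0$ small, and set $E := \{k : |x_k| > \delta\}$; since $\mathcal{I}\text{-}\lim x = 0$ we have $E \in \mathcal{I}$. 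Now split the sum defining $(Ax)_n$ as
\[
|(Ax)_n| \;\le\; \sum_{k \in E} |a_{n,k}|\,|x_k| \;+\; \sum_{k \notin E} |a_{n,k}|\,|x_k| \;\le\; M \sum_{k \in E} |a_{n,k}| \;+\; \delta c.
\]
Choosing $\delta$ so that $\delta c < \varepsilon/2$, we get $|(Ax)_n| \le M \sum_{k\in E}|a_{n,k}| + \varepsilon/2$. By \ref{item:T3} applied to this particular $E \in \mathcal{I}$, the set $\{n : M \sum_{k \in E} |a_{n,k}| \ge \varepsilon/2\}$ lies in $\mathcal{J}$. Hence $\{n : |(Ax)_n| \ge \varepsilon\}$ is contained in that set and so belongs to $\mathcal{J}$. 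Since $\varepsilon$ was arbitrary, $\mathcal{J}\text{-}\lim Ax = 0$, which is exactly $Ax \in c_0(\mathcal{J}) \cap \ell_\infty$.

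I do not expect a serious obstacle here; the argument is a routine tail-splitting estimate, and the only subtlety is making sure the set $E$ chosen from the $\mathcal{I}$-smallness of $x$ is the same set fed into hypothesis \ref{item:T3}, and that the exceptional index set for $Ax$ is genuinely a subset of an $\mathcal{J}$-small set (using that ideals are closed under subsets). One should also remark at the start that \ref{item:T3} with $E \in \mathrm{Fin}$ a singleton recovers a columnwise $\mathcal{J}$-convergence statement, but this is not needed for the proof. The cleanest write-up fixes $\varepsilon$, then $\delta$, then $E$, then invokes \ref{item:T3}, in that order.
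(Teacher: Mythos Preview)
Your proof is correct and follows essentially the same approach as the paper: both split the sum over $E=\{k:|x_k|>\delta\}\in\mathcal{I}$ and its complement, bound the complement piece by $\delta\cdot\sup_n\sum_k|a_{n,k}|$, and use \ref{item:T3} on $E$ to put the exceptional row indices into $\mathcal{J}$; the second implication likewise goes through the inclusion $(c_0(\mathcal{I})\cap\ell_\infty,\,c_0(\mathcal{J})\cap\ell_\infty)\subseteq(c_0,\ell_\infty)$ and Theorem~\ref{lem:boundedtobounded}. The only cosmetic difference is in the bookkeeping of constants.
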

\begin{proof}
First, let us prove that if $A$ satisfies \ref{item:T1} and \ref{item:T3} then $A \in (c_0(\mathcal{I}) \cap \ell_\infty, c_0(\mathcal{J})\cap \ell_\infty)$. Fix $x \in c_0(\mathcal{I}) \cap \ell_\infty$. By condition \ref{item:T1} and Theorem \ref{lem:boundedtobounded}, we get $A \in (\ell_\infty,\ell_\infty)$, hence 
$Ax$ is a well-defined bounded sequence. 
At this point, fix $\varepsilon>0$ and define
$$
\delta:=\frac{\varepsilon}{2\max\{1,M,\|x\|\}}>0,
$$
where $M:=\sup_n \sum_k |a_{n,k}|$ and $\|x\|=\sup_k |x_k|$. 
Note that, since $x \in c_0(\mathcal{I})$ then 
$ 
K:=\{n\in \mathbf{N}: |x_n|>\delta\}\in \mathcal{I}.
$ 
By condition \ref{item:T3}, 
it follows that 
$
\textstyle S:=\{n \in \mathbf{N}: \sum_{k \in K}|a_{n,k}|>\delta\} \in \mathcal{J}.
$ 
Now observe that if $\sum_k |a_{n,k}x_k|>\varepsilon$ then 
$$
\textstyle 
\sum_{k \in K} |a_{n,k}x_k|>\frac{\varepsilon}{2}
\quad \text{or}\quad 
\sum_{k \notin K} |a_{n,k}x_k|>\frac{\varepsilon}{2}.
$$
If it is the case that $\sum_{k \in K} |a_{n,k}x_k|>\frac{\varepsilon}{2}$ then
$$
\textstyle 
\frac{\varepsilon}{2}<\sum_{k \in K} |a_{n,k}| |x_k|\le \|x\|\sum_{k \in K} |a_{n,k}|%\le \|x\| M
$$
and hence $\delta \le \varepsilon/2\max\{1,\|x\|\}<\sum_{k \in K} |a_{n,k}|$, i.e., $n\in S$. 
In the event that $\sum_{k \notin K} |a_{n,k}x_k|>\frac{\varepsilon}{2}$, it follows that 
$$
\textstyle 
\frac{\varepsilon}{2}<\sum_{k \notin K} |a_{n,k}| |x_k|\le \delta \sum_{k \notin K} |a_{n,k}|\le \delta M\le  \frac{\varepsilon}{2},
$$
which is a contradiction, hence 
$\{n \in \mathbf{N}: |\sum_k a_{n,k}x_k|>\varepsilon\}\subseteq S \in \mathcal{J}$. Since $\varepsilon$ is arbitrary, we conclude that $\mathcal{J}\text{-}\lim Ax=0$.

\bigskip

To prove the second part of the statement, it is sufficient to observe that 
$$
(c_0(\mathcal{I}) \cap \ell_\infty, c_0(\mathcal{J})\cap \ell_\infty)\subseteq (c_0,\ell_\infty)
$$
and to use Theorem \ref{lem:boundedtobounded}.
\end{proof}

\begin{lem}\label{lem:c0cc}
Let $\mathcal{I}, \mathcal{J}$ be ideals on $\mathbf{N}$. Then $A$ is $(\mathcal{I}, \mathcal{J})$-regular if and only if $A \in (c_0(\mathcal{I}) \cap \ell_\infty, c_0(\mathcal{J})\cap \ell_\infty)$ and \ref{item:T2} holds.
\end{lem}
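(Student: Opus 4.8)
The plan is to prove the equivalence in Lemma \ref{lem:c0cc} by exploiting the linear decomposition $c(\mathcal{I})\cap\ell_\infty = (c_0(\mathcal{I})\cap\ell_\infty) \oplus \langle \mathbf{1}\rangle$, where $\mathbf{1}=(1,1,\ldots)$ is the constant sequence. Indeed, every bounded $\mathcal{I}$-convergent sequence $x$ with $\mathcal{I}\text{-}\lim x = \eta$ can be written uniquely as $x = \eta\mathbf{1} + (x-\eta\mathbf{1})$, with $x-\eta\mathbf{1}\in c_0(\mathcal{I})\cap\ell_\infty$, and $Ax = \eta\, A\mathbf{1} + A(x-\eta\mathbf{1})$ by linearity (note $A\mathbf{1}$ is well defined under \ref{item:T1} by Theorem \ref{lem:boundedtobounded}, so one should first observe that each direction forces $A\in(\ell_\infty,\ell_\infty)$).

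For the forward direction, assume $A$ is $(\mathcal{I},\mathcal{J})$-regular. Applying the definition to $x=\mathbf{1}$ (which lies in $c(\mathcal{I})\cap\ell_\infty$ with $\mathcal{I}$-limit $1$) gives $A\mathbf{1}\in c(\mathcal{J})\cap\ell_\infty$ and $\mathcal{J}\text{-}\lim A\mathbf{1} = 1$; but $A\mathbf{1} = (\sum_k a_{n,k})_n$, so this is precisely \ref{item:T2}. To get $A\in(c_0(\mathcal{I})\cap\ell_\infty, c_0(\mathcal{J})\cap\ell_\infty)$, take any $x\in c_0(\mathcal{I})\cap\ell_\infty$; then $x\in c(\mathcal{I})\cap\ell_\infty$ with $\mathcal{I}$-limit $0$, so by $(\mathcal{I},\mathcal{J})$-regularity $Ax\in c(\mathcal{J})\cap\ell_\infty$ with $\mathcal{J}\text{-}\lim Ax = 0$, i.e.\ $Ax\in c_0(\mathcal{J})\cap\ell_\infty$.

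For the converse, assume $A\in(c_0(\mathcal{I})\cap\ell_\infty, c_0(\mathcal{J})\cap\ell_\infty)$ and \ref{item:T2}. By the second implication of Lemma \ref{thm:mainc0c0}, $A$ satisfies \ref{item:T1}, so $A\mathbf{1}$ is a well-defined bounded sequence, and \ref{item:T2} says $A\mathbf{1}\in c(\mathcal{J})\cap\ell_\infty$ with $\mathcal{J}\text{-}\lim A\mathbf{1}=1$. Now fix arbitrary $x\in c(\mathcal{I})\cap\ell_\infty$ and set $\eta:=\mathcal{I}\text{-}\lim x$. Write $x=\eta\mathbf{1}+y$ with $y:=x-\eta\mathbf{1}\in c_0(\mathcal{I})\cap\ell_\infty$; then $Ax=\eta\,A\mathbf{1}+Ay$ with $Ay\in c_0(\mathcal{J})\cap\ell_\infty$ by hypothesis. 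Since $c(\mathcal{J})\cap\ell_\infty$ is a vector space containing both $A\mathbf{1}$ and $Ay$, we get $Ax\in c(\mathcal{J})\cap\ell_\infty$, and by linearity of the $\mathcal{J}$-limit on $c(\mathcal{J})$, $\mathcal{J}\text{-}\lim Ax = \eta\cdot 1 + 0 = \eta = \mathcal{I}\text{-}\lim x$, which is exactly $(\mathcal{I},\mathcal{J})$-regularity.

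This argument is essentially bookkeeping, so I do not anticipate a serious obstacle; the only points requiring a little care are (i) justifying that $A\mathbf{1}$ and $A y$ are well defined as genuine sums — which is handled by invoking Theorem \ref{lem:boundedtobounded} via \ref{item:T1} — and (ii) the standard facts that $c(\mathcal{J})\cap\ell_\infty$ is closed under linear combinations and that $\mathcal{J}\text{-}\lim$ is linear there, which are routine consequences of the $\varepsilon$-definition of ideal convergence and the closure of $\mathcal{J}$ under finite unions. If one wants to be fully self-contained, these two facts can be recalled in a single sentence rather than proved.
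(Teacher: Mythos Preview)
Your proof is correct and follows essentially the same approach as the paper's: both directions are handled by the decomposition $x=\eta\mathbf{1}+y$ with $y\in c_0(\mathcal{I})\cap\ell_\infty$, applying the hypothesis to $\mathbf{1}$ to obtain \ref{item:T2} and to $y$ to control $Ay$. If anything, your version is slightly more careful than the paper's in explicitly invoking Lemma \ref{thm:mainc0c0} and Theorem \ref{lem:boundedtobounded} to justify that $A\mathbf{1}$ and $Ax$ are well-defined bounded sequences in the converse direction.
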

\begin{proof}
\textsc{If Part.} Suppose that $A \in (c_0(\mathcal{I})\cap \ell_\infty, c_0(\mathcal{J})\cap \ell_\infty)$ and satisfies condition \ref{item:T2}. Fix a sequence $x \in c(\mathcal{I}) \cap \ell_\infty$ such that $\mathcal{I}\text{-}\lim x=\eta$. 
It follows that the sequence $y$ defined by $y_n:=x_n-\eta$ for all $n$ belongs to $c_0(\mathcal{I})\cap \ell_\infty$, so that $Ay \in c_0(\mathcal{J}) \cap \ell_\infty$. 
Therefore
$$
\textstyle 0=\mathcal{J}\text{-}\lim Ay=\mathcal{J}\text{-}\lim Ax-\eta \cdot \mathcal{J}\text{-}\lim_n \sum_k a_{n,k},
$$
which implies that $Ax \in c(\mathcal{J}) \cap \ell_\infty$ and $\mathcal{J}\text{-}\lim Ax=\eta$. 

\medskip

\textsc{Only If Part.} Suppose that $A$ is a $(\mathcal{I},\mathcal{J})$-regular matrix. In particular, it is clear that $A \in (c_0(\mathcal{I})\cap \ell_\infty, c_0(\mathcal{J})\cap \ell_\infty)$. Finally, let $x$ be the constant sequence $1$. Then $\lim_n x_n=1$, so that  $Ax=(\sum_{k}a_{n,k}) \in c(\mathcal{J}) \cap \ell_\infty$ and $\mathcal{J}\text{-}\lim_n \sum_{k}a_{n,k}=1$, i.e., condition \ref{item:T2} holds.
\end{proof}

\begin{proof}
[Proof of Theorem \ref{thm:mainsilvermantoeplitz}]
It follows putting together Lemma \ref{thm:mainc0c0} and Lemma \ref{lem:c0cc}.
\end{proof}

\begin{proof}
[Proof of Theorem \ref{thm:positive}] 
Thanks to Theorem \ref{thm:mainsilvermantoeplitz}, it sufficient to show that, in each case, an $(\mathcal{I}, \mathcal{J})$-regular matrix satisfies condition \ref{item:T3}. 
Note that if $A$ is $(\mathcal{I}, \mathcal{J})$-regular, then $\mathcal{J}\text{-}\lim Ae^k=0$ by condition \ref{item:T2}, where $e^k$ is the $k$th unit vector of $\ell_\infty$ for each $k \in \mathbf{N}$, i.e., $\mathcal{J}\text{-}\lim_n |a_{n,k}|=0$. This implies that 
\begin{equation}\label{eq:sliding}
\textstyle 
\forall m \in \mathbf{N}, \quad 
\lim_n \sum_{k\le m}|a_{n,k}|=0.
\end{equation}

\medskip

First, suppose that $A$ is a nonnegative $(\mathcal{I}, \mathcal{J})$-regular matrix and fix $E \in \mathcal{I}$. Since $\bm{1}_E \in c_0(\mathcal{I})\cap \ell_\infty$, we obtain that $\mathcal{J}\text{-}\lim A\bm{1}_E=\mathcal{J}\text{-}\lim_n \sum_{k\in E}a_{n,k}=0$. 

\medskip

Second, thanks to \eqref{eq:sliding}, condition \ref{item:T3} holds if $\mathcal{I}=\mathrm{Fin}$.

\medskip

Third, let $A$ be an $(\mathcal{I}, \mathrm{Fin})$-regular matrix 
with $\mathcal{I}\neq \mathrm{Fin}$, and hence $\mathcal{J}=\mathrm{Fin}$ and  $\mathcal{J}\text{-}\lim$ is the ordinary limit.
Suppose for the sake of contradiction that condition \ref{item:T3} fails. 
Hence there exists an infinite set $I \in \mathcal{I}$ such that 
\begin{equation}\label{eq:contradictionE}
\textstyle \limsup_{n} \sum_{k \in I}|a_{n,k}| \neq 0.
\end{equation}
Accordingly, let $(i_n)$ be the increasing enumeration of $I$. 
Taking into account \eqref{eq:contradictionE} and the fact that $ \sum_{k\in I}|a_{n,k}| \le \sum_k |a_{n,k}| \le \sup_i \sum_i |a_{i,k}|$ for all $n\in \mathbf{N}$, it follows that the bounded sequence $(\sum_{k\in I}|a_{n,k}|)$ has at least one accumulation point, let us say $\kappa \in [0,\sup_n \sum_k |a_{n,k}|]$, that is, $\kappa$ verifies $\{n \in \mathbf{N}: |\sum_{k\in I}|a_{n,k}|-\kappa|<\varepsilon\} \notin \mathrm{Fin}$ for all $\varepsilon>0$. 
In addition, thanks to \eqref{eq:contradictionE}, we may assume that $\kappa \neq 0$. 
This implies that
$$
\textstyle S:=\left\{n \in \mathbf{N}: \frac{7}{8}\kappa < \sum_{k \in I}|a_{n,k}| < \frac{9}{8}\kappa\right\}\notin \mathrm{Fin}.
$$
%In particular, also the set $S$ is infinite. 
With these premises, we are going to construct 
a sequence $x \in \{-1,0,1\}^{\mathbf{N}}$ supported on $I$ (hence, in particular, $x \in c_0(\mathcal{I}) \cap \ell_\infty$) such that the sequence $Ax$ is (well defined and) not convergent to $0$. This would provide the desired contradiction. % with condition \ref{item:T0added}.

Thus, we define two increasing sequences $(s_n)$ and $(m_n)$ of positive integers as it follows. Fix arbitrarily $s_1\in S$ and choose $m_1 \in \mathbf{N}$ such that $\sum_{j\le m_1}|a_{s_1,i_j}|\ge \frac{7}{8}\kappa$. At this point, suppose that $s_1,\ldots,s_{n-1}$ and $m_1,\ldots,m_{n-1}$ have been defined. Then choose $s_n$ and $m_n$ recursively such that:
\begin{enumerate}[label=(\roman*)]
\item \label{item:condition1} $s_n\in S\setminus [1,s_{n-1}]$ and $\sum_{j\le m_{n-1}}|a_{s_n,i_j}| \le \frac{1}{8}\kappa$ (which is possible, thanks to \eqref{eq:sliding});
\item \label{item:condition2} $m_n>m_{n-1}$ and $\sum_{j\le m_n}|a_{s_n,i_j}| \ge \frac{7}{8}\kappa$.
\end{enumerate}
Thus, for each $n \in \mathbf{N}$, define
$$
\textstyle 
\alpha_{n}:=\sum_{j \le m_{n-1}}|a_{s_n,i_j}|, 
\quad
\beta_{n}:=\sum_{m_{n-1}<j \le m_n}|a_{s_n,i_j}|, 
\quad \text{ and }\quad
\gamma_{n}:=\sum_{j>m_{n}}|a_{s_n,i_j}|.
$$
According to points \ref{item:condition1} and \ref{item:condition2} above and the definition of $S$, we have 
\begin{equation}\label{eq:inequalitiesalphabetagamma}
\textstyle
\forall n \in \mathbf{N}, \quad
\alpha_n \le \frac{1}{8}\kappa, 
\quad 
\alpha_n+\beta_n \ge \frac{7}{8}\kappa,
\quad \text{and} \quad 
\frac{7}{8}\kappa< 
\alpha_n+\beta_n+\gamma_n < \frac{9}{8}\kappa. 
\end{equation}

To conclude, let $x=(x_n)$ be the sequence supported on $I$ such that $x_{i_j}=1$ if there exists $n \in \mathbf{N}$ for which $j \in (m_{n-1},m_n]$ and $a_{s_n,i_j}>0$, where $m_0:=1$; otherwise $x_{i_j}=-1$. It follows by construction that 
\begin{displaymath}
\textstyle 
(Ax)_{s_n}
=\sum_{i \in I} a_{s_n,i}x_{i}
=\sum_{j\le m_{n-1}} a_{s_n,i_j}x_{i_j}
+\sum_{m_{n-1}<j \le m_n}|a_{s_n,i_j}|
+\sum_{j>m_n} a_{s_n,i_j}x_{i_j}
\end{displaymath}
for all $n \in \mathbf{N}$. Hence, thanks to \eqref{eq:inequalitiesalphabetagamma}, we obtain 
\begin{displaymath}
\begin{split}
\textstyle 
\forall n \in \mathbf{N},\quad 
%|T_{s_n}(x)| 
|(Ax)_{s_n}|
&\textstyle \ge \beta_n-\alpha_n-\gamma_n\\
&\textstyle =2(\alpha_n+\beta_n)-2\alpha_n-(\alpha_n+\beta_n+\gamma_n)>\frac{3}{8}\kappa.
\end{split}
\end{displaymath}
Hence 
$\{n \in \mathbf{N}: |(Ax)_n|>\frac{3}{8}\kappa\}$ 
contains the infinite set $\{s_n: n \in \mathbf{N}\}\notin\mathcal{J}=\mathrm{Fin}$, which 
implies that $\lim Ax\neq 0$. 
This contradiction concludes the proof. 
\end{proof}

%The technique used in the above proof is somehow reminiscent of \cite[Lemma 3.1]{MR27351}, although in a continuous setting. 

\begin{proof}[Proof of Theorem \ref{thm:negativemain}]
The proof has two main steps. First we construct a matrix $A \in (\ell_\infty, c_0(\mathcal{Z})\cap \ell_\infty$) which fails \ref{item:T3}, and then, using $A$, construct a matrix $B$ that is $(\mathcal{I}, \mathcal{Z})$-regular and fails \ref{item:T3}. 

For each $m \in \mathbf{N}$, let $\lambda_m$ be the smallest nonnegative integer $t$ such that $2^{t} \ge m!$, so that 
$$
\textstyle 
\forall m\in \mathbf{N}, \quad 
m! \le 2^{\lambda_m}=2^{\lceil \log_2 m!\rceil}\le 2\cdot m!.
$$
Set $R:=\bigcup_m R_m$, where $R_m:=\{m!,m!+1,\ldots, m!+2^{\lambda_m}-1\}$ for all $m \in \mathbf{N}$. 
In particular, $R_1=\{1,2\}$, $R_2=\{2,3\}$, $R_3=\{6,7,\ldots,13\}$, etc., so that $\max R_m<\min R_{m+1}$ for all $m\ge 2$. 
By hypothesis $\mathcal{I}\neq \mathrm{Fin}$, hence there exists an infinite set $I \in \mathcal{I}\setminus \mathrm{Fin}$. Let $(i_j)$ be the increasing enumeration of the elements of $I$ and define
$$
\textstyle 
\forall m \in \mathbf{N}, \quad 
Q_m:=R_m \times C_m,
$$
where $C_m:=\{i_{\alpha_{m-1}+1},i_{\alpha_{m-1}+2},\ldots,i_{\alpha_m}\}$, $\alpha_m:=\sum_{i\le m}\lambda_i$, and, by convention, $\alpha_0:=0$.

At this point, let us define the matrix $A$ such that
\begin{displaymath}
\forall n,k \in \mathbf{N}, \quad 
|a_{n,k}|=
\begin{cases}
\,1/m \,\,& \text{ if there exists } m \in \mathbf{N} \text{ such that }(n,k) \in Q_m,\\
\,0 & \text{ otherwise},
\end{cases}
\end{displaymath}
\noindent and, in addition, for all $m \in \mathbf{N}$, the vectors of signs of each row of $Q_m$
$$
\textstyle 
(\mathrm{sgn}(a_{n,i_{\alpha_{m-1}+1}}), \mathrm{sgn}(a_{n,i_{\alpha_{m-1}+2}}), \ldots, \mathrm{sgn}(a_{n,i_{\alpha_{m}}}))
$$
are all distinct (note this is possible; here $\mathrm{sgn}(z):=1$ if $z>0$ and $\mathrm{sgn}(z)=-1$ if $z<0$). 

Now, we claim that $A \in (\ell_\infty, c_0(\mathcal{Z})\cap \ell_\infty)$. Let $x$ be a nonzero bounded sequence. Then $Ax$ is the bounded sequence such that $(Ax)_n$ is equal to $0$ if $n\notin R$ and $\frac{1}{m}\sum_{k}\mathrm{sgn}(a_{n,k})x_k$ for all $n \in R_m$ and $m \in \mathbf{N}$. For each $\varepsilon>0$, it follows that
\begin{equation}\label{eq:unionssiugsdjhg}
%\textstyle 
\{n\in \mathbf{N}: |(Ax)_n|>\varepsilon\} 
\subseteq R_1\cup \bigcup_{m\ge 2}\left\{n \in R_m: \frac{1}{m}\left|\sum_{k \in C_m}\mathrm{sgn}(a_{n,k})x_k\right|>\varepsilon\right\}.
\end{equation}
%where $\delta:=\varepsilon/\|x\|>0$. 
%Note that we have $\{n \in R: |x_n|>\varepsilon\} \in \mathcal{I}\subseteq \mathcal{Z}$. 
In addition, by the weak law of large numbers we have that
$$
%\textstyle 
\lim_{m \to \infty}\frac{1}{2^{\lambda_m}}\left|\left\{n \in R_m: \frac{1}{m}\left|\sum_{k \in C_m}\mathrm{sgn}(a_{n,k})x_k\right|>\varepsilon\right\}\right|=0,
$$ 
cf. e.g. \cite[Theorem 6.2]{MR830424}. Hence the upper asymptotic density (which is the function $\mathsf{d}^\star_g$ defined below in \eqref{eq:upperdensity} with $g(n)=n$ for all $n$) of the latter union in \eqref{eq:unionssiugsdjhg} is at most
$$
\limsup_{m\to \infty}\frac{\max R_{m-1}+o(2^{\lambda_m})}{\min R_m}\le \limsup_{m\to \infty}\frac{2\cdot (m-1)!+o(m!)}{m!}=0. 
$$
Therefore $\mathcal{Z}\text{-}\lim Ax=0$. 

Observe that $R\notin \mathcal{Z}$, indeed its upper asymptotic is at least
$$
%\limsup_{n\to \infty}\frac{|R \cap [1,n]|}{n}\ge 
\limsup_{m\to \infty}\frac{|R \cap [1, \max R_m]|}{\max R_m} \ge 
\limsup_{m \to \infty}\frac{|R_m|}{m!+|R_m|}\ge 
\limsup_{m \to \infty}\frac{m!}{m!+2\cdot m!-1}= 
\frac{1}{3}.
$$
Therefore $\mathcal{Z}\text{-}\lim_n \sum_{k \in I}|a_{n,k}|\neq 0$, i.e., $A$ does not satisfy \ref{item:T3}. However, with a similar reasoning, the matrix $A$ does not satisfy \ref{item:T2} as well. Hence, we need to modify slightly the definition of $A$ to conclude the proof.

For, define $B=(b_{n,k}):=A+\mathrm{Id}$, where $\mathrm{Id}$ stands for the (infinite) identity matrix. 
Then $B \in (\ell_\infty, \ell_\infty)$ since $\sup_n \sum_k |b_{n,k}|\le 1+\sup_n \sum_k |a_{n,k}|<\infty$. Fix a sequence $x \in c(\mathcal{I}) \cap \ell_\infty$ with $\eta:=\mathcal{I}\text{-}\lim x$. 
Thus $Bx$ is a well-defined bounded sequence. In addition, since $A \in (\ell_\infty, c_0(\mathcal{Z}) \cap \ell_\infty)$ and $\mathcal{I}\subseteq \mathcal{Z}$, we have $$
\mathcal{Z}\text{-}\lim Bx=\mathcal{Z}\text{-}\lim Ax+\mathcal{Z}\text{-}\lim x=\eta.
$$
Therefore $B$ is $(\mathcal{I}, \mathcal{Z})$-regular (in particular, differently from $A$, the matrix $B$ satisfies \ref{item:T2}). Lastly, note that it follows by construction that
$$
%\forall m \in \mathbf{N}, \forall n \in R_m, \quad 
\sum_{k \in I}|b_{n,k}| \ge \left(1-\frac{1}{m}\right)+(m-1)\cdot \frac{1}{m}
\ge 1
%=2\left(1-\frac{1}{m}\right).
$$
for all integers $m\ge 2$ and $n \in R_m$. 
Since $R\notin \mathcal{Z}$ (hence also $R\setminus R_1 \notin \mathcal{Z}$), we conclude that 
$\mathcal{Z}\text{-}\lim_n \sum_{k \in I}|b_{n,k}|\neq 0$. 
Therefore $B$ does not satisfy \ref{item:T3}. 
\end{proof}

Incidentally, note that matrix $A$ defined above belongs to $(\ell_\infty, c_0(\mathcal{Z})\cap \ell_\infty)$, hence also to $(c_0(\mathcal{I}) \cap \ell_\infty, c_0(\mathcal{Z}) \cap \ell_\infty)$. This shows that the reverse implication of the first part of the statement of Lemma \ref{thm:mainc0c0} does not hold as well.

%%%%%%%%%%%%%%%%%%%%%%%%%%%%%%
\section{Applications}\label{sec:applications}

\subsection{Diagonal matrices} Given a sequence $s \in \omega$, we denote by $\mathrm{diag}(s)$ the diagonal matrix $A_s=(a_{n,k})$ defined by $a_{n,k}=0$ and $a_{n,n}=s_n$ for all distinct $n,k \in \mathbf{N}$. 
Accordingly, given sequence space $X,Y\subseteq \omega$, we let $m(X,Y)$ be the set of so-called \textbf{multipliers} from $X$ into $Y$, i.e., the set of all sequences $s \in \omega$ such that $sx:=(s_nx_n)$ is a well-defined sequence in $Y$ for all $x \in X$, cf. \cite{MR1955734}. 
In other words, 
$$
\textstyle 
m(X,Y):=\{s \in \omega: \mathrm{diag}(s) \in (X,Y)\}.
$$
\begin{thm}
Let $\mathcal{I},\mathcal{J}$ be ideals on $\mathbf{N}$. Then
$$
\textstyle 
m(c_0(\mathcal{I}) \cap \ell_\infty, c_0(\mathcal{J})\cap \ell_\infty) =\{s \in \ell_\infty: \mathcal{J}\text{-}\lim s\bm{1}_E=0 \text{ for all }E \in \mathcal{I}\}.
$$
\end{thm}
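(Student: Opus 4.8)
The plan is to read both sides of the claimed identity as the diagonal specialization of Lemma~\ref{thm:mainc0c0}. Fix $s \in \omega$ and put $A := \mathrm{diag}(s)$, so that, by definition of $m(\cdot,\cdot)$, the sequence $s$ lies in the left-hand side exactly when $A \in (c_0(\mathcal{I}) \cap \ell_\infty, c_0(\mathcal{J})\cap \ell_\infty)$. Since the only nonzero entry of the $n$th row of $A$ is $a_{n,n}=s_n$, one has $\sum_k |a_{n,k}| = |s_n|$ for every $n$, and $\sum_{k \in E}|a_{n,k}| = |s_n|\,\bm{1}_E(n) = |(s\bm{1}_E)_n|$ for every $E \subseteq \mathbf{N}$ and every $n$. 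Hence condition~\ref{item:T1} for $A$ is literally the statement $s \in \ell_\infty$, and condition~\ref{item:T3} for $A$ is literally the statement that $\mathcal{J}\text{-}\lim s\bm{1}_E = 0$ for all $E \in \mathcal{I}$ (recall that $\mathcal{J}$-convergence to $0$ of a sequence and of its absolute values amount to the same thing). Thus the right-hand side of the identity is precisely $\{s : A = \mathrm{diag}(s) \text{ satisfies both \ref{item:T1} and \ref{item:T3}}\}$.

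For the inclusion $\supseteq$, I would take $s$ in the right-hand side, observe via the translation above that $A = \mathrm{diag}(s)$ satisfies \ref{item:T1} and \ref{item:T3}, and then apply the first half of Lemma~\ref{thm:mainc0c0} to conclude $A \in (c_0(\mathcal{I}) \cap \ell_\infty, c_0(\mathcal{J})\cap \ell_\infty)$, that is, $s \in m(c_0(\mathcal{I}) \cap \ell_\infty, c_0(\mathcal{J})\cap \ell_\infty)$.

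For the inclusion $\subseteq$, suppose $s \in m(c_0(\mathcal{I}) \cap \ell_\infty, c_0(\mathcal{J})\cap \ell_\infty)$, i.e. $A \in (c_0(\mathcal{I}) \cap \ell_\infty, c_0(\mathcal{J})\cap \ell_\infty)$. The second half of Lemma~\ref{thm:mainc0c0} gives that $A$ satisfies \ref{item:T1}, whence $\|s\|_\infty = \sup_n |s_n| < \infty$, so $s \in \ell_\infty$. To get the remaining condition, fix $E \in \mathcal{I}$ and note that $\bm{1}_E \in c_0(\mathcal{I}) \cap \ell_\infty$ (for $0<\varepsilon<1$ the superlevel set $\{n : \bm{1}_E(n) > \varepsilon\}$ is $E \in \mathcal{I}$, and it is empty for $\varepsilon \ge 1$); therefore $A\bm{1}_E = s\bm{1}_E \in c_0(\mathcal{J}) \cap \ell_\infty$, i.e. $\mathcal{J}\text{-}\lim s\bm{1}_E = 0$. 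This establishes the reverse inclusion, and the identity follows.

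I do not expect any genuine obstacle: once the diagonal matrix is fed into Lemma~\ref{thm:mainc0c0} the argument is pure bookkeeping, and the only step deserving a moment's attention is the elementary evaluation $\sum_{k\in E}|a_{n,k}| = |s_n|$ for $n \in E$ and $0$ otherwise, which is exactly what makes \ref{item:T3} collapse to the single family of test sequences $\{\bm{1}_E : E \in \mathcal{I}\}$. It is worth remarking that the inclusion $\subseteq$ in fact needs only a fragment of Lemma~\ref{thm:mainc0c0}: the boundedness of $s$ also follows from the inclusion $(c_0(\mathcal{I}) \cap \ell_\infty, c_0(\mathcal{J})\cap \ell_\infty) \subseteq (c_0,\ell_\infty)$ combined with Theorem~\ref{lem:boundedtobounded}, and the ideal-limit condition is obtained directly by testing $A$ against the sequences $\bm{1}_E$, $E \in \mathcal{I}$.
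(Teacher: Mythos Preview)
Your proof is correct and follows essentially the same approach as the paper: both directions are reduced to the diagonal specialization of Lemma~\ref{thm:mainc0c0}, with \ref{item:T1} becoming $s\in\ell_\infty$ and \ref{item:T3} becoming $\mathcal{J}\text{-}\lim s\bm{1}_E=0$ for all $E\in\mathcal{I}$, and the converse inclusion obtained by testing against the indicator sequences $\bm{1}_E$. Your write-up is in fact a bit more explicit than the paper's in spelling out the row-sum computations and the reason $\bm{1}_E\in c_0(\mathcal{I})\cap\ell_\infty$.
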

\begin{proof}
First suppose that $s\in\ell_\infty$ and $\mathcal{J}\text{-}\lim s\bm{1}_E=0 \text{ for all }E \in \mathcal{I}$. Then $s\in\ell_\infty$ yields that $\mathrm{diag}(s)$ satisfies \ref{item:T1} and, as $\mathcal{J}\text{-}\lim s\bm{1}_E=0$ implies $\mathcal{J}\text{-}\lim |s|\bm{1}_E=0$, \ref{item:T3} is satisfied and hence $s\in m(c_0(\mathcal{I}) \cap \ell_\infty, c_0(\mathcal{J})\cap \ell_\infty)$ as $\mathrm{diag}(s)\in(c_0(\mathcal{I})\cap\ell_\infty,(c_0(\mathcal{J})\cap\ell_\infty)$. 
Conversely, $\mathrm{diag}(s)\in(c_0(\mathcal{I})\cap\ell_\infty,(c_0(\mathcal{J})\cap\ell_\infty)$ yields $s\in\ell_\infty$, by Lemma \ref{thm:mainc0c0}, and  $\mathcal{J}\text{-}\lim s\bm{1}_E=0 \text{ for all }E \in \mathcal{I}$ by the definition of a multiplier. 
%Note that condition \ref{item:T1} means $s \in \ell_\infty$. 
%Moreover, if $\mathrm{diag}(s)$ is $(\mathcal{I}, \mathcal{J})$-regular, then $s\bm{1}_E \in c_0(\mathcal{J})$ for all $E %\in \mathcal{I}$, which can be rewritten as $|s|\bm{1}_E \in c_0(\mathcal{J})$ for all $E \in \mathcal{I}$ where $|s|:=(|s_n|)$, that is, condition \ref{item:T3} holds. The claim follows by Lemma \ref{thm:mainc0c0}.
\end{proof}

At this point, note that, for each $s \in \omega$, the sequence $s\bm{1}_E$ is supported on $E$. In particular $\mathcal{J}\text{-}\lim s\bm{1}_E=0$ whenever $E \in \mathcal{J}$. This implies that:
\begin{cor}\label{cor:multiplierinclusion}
$m(c_0(\mathcal{I}) \cap \ell_\infty, c_0(\mathcal{J})\cap \ell_\infty) =\ell_\infty$ for all ideals $\mathcal{I}\subseteq \mathcal{J}$. 
\end{cor}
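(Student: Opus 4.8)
The plan is to deduce Corollary~\ref{cor:multiplierinclusion} directly from the Theorem immediately preceding it, using the observation in the paragraph between them. Since the Theorem identifies $m(c_0(\mathcal{I}) \cap \ell_\infty, c_0(\mathcal{J})\cap \ell_\infty)$ with the set of bounded sequences $s$ satisfying $\mathcal{J}\text{-}\lim s\bm{1}_E=0$ for every $E \in \mathcal{I}$, the only thing left to show under the hypothesis $\mathcal{I} \subseteq \mathcal{J}$ is that this side condition is automatically satisfied by every $s \in \ell_\infty$, so that the multiplier set is all of $\ell_\infty$. (The reverse inclusion $m(c_0(\mathcal{I}) \cap \ell_\infty, c_0(\mathcal{J})\cap \ell_\infty) \subseteq \ell_\infty$ is already part of the Theorem, or alternatively follows from Lemma~\ref{thm:mainc0c0}.)

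First I would fix an arbitrary $s \in \ell_\infty$ and an arbitrary $E \in \mathcal{I}$. Because $\mathcal{I} \subseteq \mathcal{J}$, we have $E \in \mathcal{J}$. Now for any $\varepsilon > 0$, the set $\{n \in \mathbf{N}: |(s\bm{1}_E)_n| > \varepsilon\}$ is contained in $E$, since $s\bm{1}_E$ vanishes outside $E$; hence it lies in $\mathcal{J}$ by the hereditary property of the ideal $\mathcal{J}$. As $\varepsilon$ was arbitrary, $\mathcal{J}\text{-}\lim s\bm{1}_E = 0$. Since $E \in \mathcal{I}$ was arbitrary, $s$ satisfies the defining condition of the right-hand set in the Theorem, so $s \in m(c_0(\mathcal{I}) \cap \ell_\infty, c_0(\mathcal{J})\cap \ell_\infty)$. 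This gives $\ell_\infty \subseteq m(c_0(\mathcal{I}) \cap \ell_\infty, c_0(\mathcal{J})\cap \ell_\infty)$, and combined with the reverse inclusion from the Theorem we conclude equality.

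There is essentially no obstacle here: the corollary is a one-line consequence of the Theorem together with the trivial fact that a sequence supported on a set in an ideal is ideal-convergent to zero. If anything, the only point requiring a moment's care is making explicit that $E \in \mathcal{I}$ together with $\mathcal{I} \subseteq \mathcal{J}$ forces $E \in \mathcal{J}$, and that this is exactly what makes the support condition collapse — which is precisely the content of the remark placed just before the corollary statement. So in the write-up I would simply cite the preceding Theorem and that remark, and state the two inclusions.
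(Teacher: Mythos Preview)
Your proposal is correct and follows essentially the same approach as the paper: invoke the preceding Theorem's characterization of the multiplier set, then use the remark that $s\bm{1}_E$ is supported on $E$, so that $E \in \mathcal{I} \subseteq \mathcal{J}$ forces $\mathcal{J}\text{-}\lim s\bm{1}_E = 0$ for every $s \in \ell_\infty$. The only difference is cosmetic --- you spell out the $\varepsilon$-argument explicitly, whereas the paper leaves it implicit in the one-line remark preceding the corollary.
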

Particular instances of Corollary \ref{cor:multiplierinclusion} have been already obtained in \cite[Theorem 3]{MR2003000} where $\mathcal{I}=\mathcal{J}=\mathcal{I}_A$ for some nonnegative regular matrix $A$ and \cite[Theorem 1 and Theorem 3]{MR1955734} where $\mathcal{I}=\mathcal{J}$ or $\mathcal{I}=\mathrm{Fin}$.

\subsection{Permutations} 
Given a permutation $\sigma$ of $\mathbf{N}$ and a sequence $x \in \omega$, we let $\sigma(x)$ be the sequence defined by 
$\sigma(x)_n:=x_{\sigma^{-1}(n)}$ for all $n \in \mathbf{N}$, cf. e.g.  \cite{MR1136599, MR966093}. 
Equivalently, 
$$
\textstyle \forall x \in \omega, \quad \sigma(x)=A_\sigma x,
$$
where $A_\sigma=(a_{n,k})$ is the matrix defined by $a_{n,k}=1$ if $\sigma^{-1}(n)=k$ and $a_{n,k}=0$ otherwise. 
Accordingly, given sequence spaces $X,Y\subseteq \omega$, we write $\sigma \in (X,Y)$ to mean $A_\sigma \in (X,Y)$ (and similarly for $(\mathcal{I},\mathcal{J})$-regularity). 
In addition, let $\widehat{\sigma}$ be 
the sequence defined by 
$$
\textstyle 
\forall n \in \mathbf{N}, \quad 
\widehat{\sigma}_n:=n/\sigma^{-1}(n). %\frac{n}{\sigma^{-1}(n)}
$$
Note that $\widehat{\sigma}$ already appeared in the literature: indeed, it has been shown in \cite{MR2464850} that a permutation $\sigma$ belongs to the L\'{e}vy group, i.e., $\sigma$ satisfies $\lim_n \frac{1}{n}|\{k \in [1,n]: \sigma(k)>n\}|=0$, if and only if $\mathcal{Z}\text{-}\lim \widehat{\sigma^{-1}}=1$; see also \cite[Theorem 2.3]{MR2464850} for a related result. 

Before we state our result on permutations, we recall that 
an ideal $\mathcal{I}$ is said to be a \textbf{simple density ideal} if there exists a function $g: \mathbf{N} \to [0,\infty)$ such that $\lim_n g(n)=+\infty$, $\lim_n n/g(n)\neq 0$, and 
$
\textstyle \mathcal{I}=\mathcal{Z}_g:=\left\{S\subseteq \mathbf{N}: \mathsf{d}^\star_g(S)=0\right\},
$ 
where $\mathsf{d}^\star_g: \mathcal{P}(\mathbf{N})\to [0,\infty]$ is the function defined by
\begin{equation}\label{eq:upperdensity}
%\textstyle 
\forall S\subseteq \mathbf{N},\quad
\mathsf{d}^\star_g(S)=\limsup_{n\to \infty}
%\frac{1}{g(n)}|S\cap [1,n]|.
\frac{|S\cap [1,n]|}{g(n)}.
\end{equation}
In particular, $\mathcal{Z}$ is the simple density ideal generated by $g(n)=n$, cf. \cite{MR3391516, MR3771234}. 
Note that the simple density ideal $\mathcal{Z}_g$ for which $n/g(n)$ is bounded does not necessarily concide with $\mathcal{Z}$. Indeed, set $S_k:=[(2k)!,(2k+1)!)$ for all $k \in \mathbf{N}$ and $S:=\bigcup_k S_{2k}$. Define $g(n)=n^2$ if $n \in S$ and $g(n)=n$ otherwise. Then it is easy to see that $S \in \mathcal{Z}_g\setminus \mathcal{Z}$. Hence $\mathcal{Z}$ is properly contained in $\mathcal{Z}_g$.

Lastly, given an ideal $\mathcal{I}$ and a sequence $x \in \omega$, 
we say that $\eta \in\mathbf{R}$ is an $\mathcal{I}$-limit point of $x$ if 
 there exists a subsequence $(x_{n_k})$ for which $\lim_k x_{n_k}=\eta$ and $\{n_k: k \in \mathbf{N}\} \notin \mathcal{I}$, cf. \cite{MR1181163}. 

The following result has been essentially proved for the case $\mathcal{I}=\mathcal{J}=\mathcal{Z}$ in \cite{Connor2020}.

\begin{thm}\label{thm:mainperm}
Let $\sigma$ be a permutation of $\mathbf{N}$ and let $\mathcal{I}, \mathcal{J}$ be ideals on $\mathbf{N}$ such that $\mathcal{J}$ is not maximal. 
Then the following are equivalent\textup{:}
\begin{enumerate}[label={\rm (\textsc{P}\arabic{*})}]
\item \label{item:P1a} $\sigma \in (c(\mathcal{I}), c(\mathcal{J}))$ 
and  $\mathcal{I}\text{-}\lim x=\mathcal{J}\text{-}\lim \sigma(x)$ for all $x \in c(\mathcal{I})$\textup{;}
\item \label{item:P2a} $\sigma \in (c(\mathcal{I}), c(\mathcal{J}))$\textup{;}
\item \label{item:P1b} $\sigma \in (c_0(\mathcal{I}), c_0(\mathcal{J}))$\textup{;}
\item \label{item:P2b} $\sigma \in (c_0(\mathcal{I}), c(\mathcal{J}))$\textup{;}
\item \label{item:P3} $\sigma(\mathcal{I}) \subseteq \mathcal{J}$\textup{;}
\item \label{item:reg} $\sigma$ is $(\mathcal{I},\mathcal{J})$-regular\textup{;}
\item \label{item:c0c0} $\sigma \in (c_0(\mathcal{I}) \cap \ell_\infty, c_0(\mathcal{J})\cap \ell_\infty)$\textup{.}
\end{enumerate}

In addition, if $\mathcal{I}=\mathcal{Z}_{g}$ and $\mathcal{J}=\mathcal{Z}_{h}$ are two simple density ideals such that
\begin{equation}\label{eq:conditiong1g2}
\textstyle 
\forall \alpha>1, \quad 
\limsup_{n\to \infty} \frac{n}{g(n)}<\infty 
\quad \text{ and }\quad 
\limsup_{n\to \infty} \frac{g(\lfloor \alpha n\rfloor)}{h(n)}<\infty,
\end{equation}
then the above conditions are equivalent to\textup{:}
\begin{enumerate}[label={\rm (\textsc{P}\arabic{*})}]
\setcounter{enumi}{7}
\item \label{item:P4} 
$0$ is not a $\mathcal{J}$-limit point of $\widehat{\sigma}$\textup{.}
\end{enumerate}
\end{thm}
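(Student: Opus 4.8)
The plan is to prove that, writing $\sigma(\mathcal{I}) := \{\sigma(E) : E \in \mathcal{I}\}$, all of \ref{item:P1a}--\ref{item:c0c0} are equivalent to \ref{item:P3}, and then, under \eqref{eq:conditiong1g2}, that \ref{item:P3} is equivalent to \ref{item:P4}. First I would observe that the permutation matrix $A_\sigma$ is nonnegative with a single nonzero entry, equal to $1$, in each row and in each column; hence $\sum_k |a_{n,k}| = \sum_k a_{n,k} = 1$ for every $n$, so that \ref{item:T1} and \ref{item:T2} hold automatically, while $\sum_{k \in E} |a_{n,k}| = \bm{1}_{\sigma(E)}(n)$ for every $E \subseteq \mathbf{N}$, so that \ref{item:T3} for $A_\sigma$ says precisely $\sigma(\mathcal{I}) \subseteq \mathcal{J}$. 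Plugging this into Theorem \ref{thm:positive} (which applies since $A_\sigma$ is nonnegative) yields \ref{item:reg} $\Leftrightarrow$ \ref{item:P3}, and plugging it into Lemma \ref{thm:mainc0c0} yields \ref{item:P3} $\Rightarrow$ \ref{item:c0c0}; the converse \ref{item:c0c0} $\Rightarrow$ \ref{item:P3} follows by feeding the indicator $\bm{1}_E$ of an arbitrary $E \in \mathcal{I}$ (which lies in $c_0(\mathcal{I}) \cap \ell_\infty$ and is mapped by $A_\sigma$ to $\bm{1}_{\sigma(E)}$) into \ref{item:c0c0}.

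For the remaining, purely ideal-theoretic conditions, the implication \ref{item:P3} $\Rightarrow$ \ref{item:P1a} is a one-line level-set computation, since $\{n : |\sigma(x)_n - \eta| > \varepsilon\} = \sigma(\{m : |x_m - \eta| > \varepsilon\})$ whenever $\mathcal{I}\text{-}\lim x = \eta$; from \ref{item:P1a} one trivially gets \ref{item:P2a} and \ref{item:P1b}, and from either of them \ref{item:P2b}. The only substantial step is \ref{item:P2b} $\Rightarrow$ \ref{item:P3}, and this is where the non-maximality of $\mathcal{J}$ is used. Assume some $E \in \mathcal{I}$ has $F := \sigma(E) \notin \mathcal{J}$; since $\bm{1}_E \in c_0(\mathcal{I})$ and $\sigma(\bm{1}_E) = \bm{1}_F$, condition \ref{item:P2b} forces $\bm{1}_F \in c(\mathcal{J})$, hence $F^c \in \mathcal{J}$. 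Pick $Z$ with $Z, Z^c \notin \mathcal{J}$ (possible as $\mathcal{J}$ is not maximal); then $F \cap Z$ and $F \cap Z^c$ both lie outside $\mathcal{J}$, because e.g. $F \cap Z \in \mathcal{J}$ would give $Z = (F \cap Z) \cup (F^c \cap Z) \in \mathcal{J}$. Now $\sigma^{-1}(F \cap Z) \subseteq E$ belongs to $\mathcal{I}$, so \ref{item:P2b} applied to its indicator gives $\bm{1}_{F \cap Z} \in c(\mathcal{J})$, and since $F \cap Z \notin \mathcal{J}$ this forces $(F \cap Z)^c \in \mathcal{J}$, whence the subset $F \cap Z^c$ is in $\mathcal{J}$ as well, a contradiction.

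For the last part, suppose $\mathcal{I} = \mathcal{Z}_g$ and $\mathcal{J} = \mathcal{Z}_h$ satisfy \eqref{eq:conditiong1g2}; I would prove \ref{item:P3} $\Leftrightarrow$ \ref{item:P4} by contraposition in both directions. If $0$ is a $\mathcal{J}$-limit point of $\widehat{\sigma}$, fix $M \notin \mathcal{J}$ with $\widehat{\sigma}_n = n/\sigma^{-1}(n) \to 0$ along $M$; then a short counting argument (for $\delta > 0$, all large $n \in M$ have $\sigma^{-1}(n) > n/\delta$, so $\sigma^{-1}(n) \leq N$ forces $n < \delta N$) gives $|\sigma^{-1}(M) \cap [1,N]| \leq c + \delta N$, hence $|\sigma^{-1}(M) \cap [1,N]| = o(N)$, and since $\limsup_n n/g(n) < \infty$ this yields $\mathsf{d}^\star_g(\sigma^{-1}(M)) = 0$; thus $E := \sigma^{-1}(M) \in \mathcal{I}$ while $\sigma(E) = M \notin \mathcal{J}$, so \ref{item:P3} fails.

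Conversely, assume \ref{item:P3} fails and pick $E \in \mathcal{I}$ with $F := \sigma(E) \notin \mathcal{J}$. For each integer $K \geq 2$ the trace $\{n \in F : n \leq N,\ \sigma^{-1}(n) \leq Kn\}$ maps injectively, via $\sigma^{-1}$, into $E \cap [1, KN]$; combining $\mathsf{d}^\star_g(E) = 0$ with the estimate $g(Kn) \leq C_K\, h(n)$ supplied by \eqref{eq:conditiong1g2} then shows $\mathsf{d}^\star_h(\{n \in F : \sigma^{-1}(n) \leq Kn\}) = 0$, so that $\{n \in F : \widehat{\sigma}_n < 1/K\} \notin \mathcal{J}$ for every $K$. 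The hard part will be to conclude from this that $0$ is a $\mathcal{J}$-limit point of $\widehat{\sigma}$: since $\mathcal{Z}_h$ need not be a $P$-ideal, one cannot simply intersect the sets $\{n \in F : \widehat{\sigma}_n < 1/K\}$, so I would instead choose a nondecreasing integer sequence $K_n \to \infty$ growing slowly enough — relative to the rate of $\sup_{m \geq n} |E \cap [1,m]|/g(m) \to 0$ and to the constants $C_K$ of \eqref{eq:conditiong1g2} — that the single set $M' := \{n \in F : \sigma^{-1}(n) > K_n n\}$ still satisfies $\mathsf{d}^\star_h(F \setminus M') = 0$, hence $M' \notin \mathcal{J}$; since $\widehat{\sigma}_n < 1/K_n \to 0$ along $M'$, this exhibits $0$ as a $\mathcal{J}$-limit point of $\widehat{\sigma}$, contradicting \ref{item:P4}. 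This diagonalization, which is exactly the place where the quantitative coupling between $g$ and $h$ in \eqref{eq:conditiong1g2} is indispensable, is the main obstacle.
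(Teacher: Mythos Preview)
Your argument for the equivalence of \ref{item:P1a}--\ref{item:c0c0} is correct and essentially identical to the paper's, including the use of non-maximality in \ref{item:P2b} $\Rightarrow$ \ref{item:P3}; the only cosmetic difference is that you invoke Theorem~\ref{thm:positive} for \ref{item:reg} $\Leftrightarrow$ \ref{item:P3} whereas the paper verifies \ref{item:T3} directly and quotes Theorem~\ref{thm:mainsilvermantoeplitz}.

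For the density part, your direction $\neg$\ref{item:P4} $\Rightarrow$ $\neg$\ref{item:P3} coincides with the paper's \ref{item:P3} $\Rightarrow$ \ref{item:P4}. The genuine difference is in the converse. The paper argues \ref{item:P4} $\Rightarrow$ \ref{item:P3} directly: it quotes the characterization (from \cite{MR3883171,MR3405547}) that $0$ is not a $\mathcal{Z}_h$-limit point of $\widehat{\sigma}$ iff $\lim_{\varepsilon\to 0^+}\mathsf{d}^\star_h(E_\varepsilon)=0$, then for $S$ with $\sigma(S)\notin\mathcal{J}$ picks a single $\varepsilon_0$ with $\mathsf{d}^\star_h(\sigma(S)\setminus E_{\varepsilon_0})>0$ and bounds this by $c\cdot\mathsf{d}^\star_g(S)$ via \eqref{eq:conditiong1g2}. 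You instead go by contraposition and, after the same counting estimate $\mathsf{d}^\star_h(F_K)=0$, build the witnessing set $M'$ by a slow diagonalization in $K$. Your diagonalization is correct: with $N_K$ chosen so that $|F_K\cap[1,N]|/h(N)<1/K$ for $N\ge N_K$ and $K_n:=K$ on $[N_K,N_{K+1})$, one has $(F\setminus M')\cap[1,N]\subseteq F_{K_N}\cap[1,N]$ since $K_n$ is nondecreasing, and the bound follows. In effect you are reproving, in this specific setting, the nontrivial direction of the limit-point characterization the paper imports as a black box; your route is therefore more self-contained, at the price of carrying out the diagonal construction explicitly, while the paper's route is shorter but relies on an external reference.
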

\begin{proof}
\ref{item:P1a} $\implies$ \ref{item:P2a} $\implies$ \ref{item:P2b} and \ref{item:P1a} $\implies$ \ref{item:P1b} $\implies$ \ref{item:P2b} are obvious.

\ref{item:P2b} $\implies$ \ref{item:P3} Suppose for the sake of contradiction that there exists $S\in  \sigma(\mathcal{I})\setminus \mathcal{J}$. Then $\bm{1}_{\sigma^{-1}(S)} \in c_0(\mathcal{I})$ and $\sigma(\bm{1}_{\sigma^{-1}(S)})=\bm{1}_{S} \in c(\mathcal{J})$, so that $S \in \mathcal{J}^\star$. Since $\mathcal{J}$ is not 
a maximal ideal, there exists a set $T\notin \mathcal{J}\cup \mathcal{J}^\star$, so that also $W:=T\cap S\notin \mathcal{J}\cup \mathcal{J}^\star$. 
Then $\sigma^{-1}(W)\subseteq \sigma^{-1}(S) \in \mathcal{I}$, which implies $\bm{1}_{\sigma^{-1}(W)} \in c_0(\mathcal{I})$ and $\sigma(\bm{1}_{\sigma^{-1}(W)})=\bm{1}_{W} \notin c(\mathcal{J})$.

\ref{item:P3} $\implies$ \ref{item:P1a} Fix $x \in c(\mathcal{I})$ such that $\mathcal{I}\text{-}\lim x=\eta$. Given $\varepsilon>0$, set $S:=\{n\in \mathbf{N}: |x_n-\eta|>\varepsilon\}$. Then $S \in \mathcal{I}$ and 
$\{\sigma^{-1}(n)\in \mathbf{N}: |x_{\sigma^{-1}(n)}-\eta|>\varepsilon\}=\sigma(S) \in \mathcal{J}$.  
Since $\varepsilon$ is arbitrary, it follows that 
$\sigma(x) \in c(\mathcal{J})$ and 
$\mathcal{J}\text{-}\lim \sigma(x)=\eta$. 

\ref{item:P3} $\Longleftrightarrow$ \ref{item:reg} $\Longleftrightarrow$ \ref{item:c0c0} 
%\textcolor{blue}{AN ALTERNATE EXPOSITION? PREVIOUS PROOF STILL .TEX FILE}
Note that if $\sigma$ is a permutation of $\mathbf{N}$ then conditions \ref{item:T1} and \ref{item:T2} are trivially verified for the matrix $A_\sigma$. Also observe that, for any set $E\subset\mathbf{N}$  one has that $A_\sigma(\bm{1}_E)=\bm{1}_{\sigma(E)}$.
It is clear that \ref{item:reg} implies \ref{item:c0c0}. If $\sigma$ satisfies \ref{item:c0c0}, then $E\in\mathcal{I}$ implies that $\sigma(E)\in\mathcal{J}$, which, in turn, yields \ref{item:P3}. Next we establish that if \ref{item:P3} holds, then \ref{item:T3} holds for $A_\sigma$; this will yield, via Theorem \ref{thm:mainsilvermantoeplitz}, that  \ref{item:reg} holds. 
To see this, fix $E\in\mathcal{I}$ and observe that  $\sum_{k\in E}a_{n,k}=\sigma (\bm{1}_E)_n$ for all $n\in\mathbf{N}$
 and $\bm{1}_{\sigma(E)}\in c_0 (\mathcal{J})\cap\ell_\infty$, i.e., $\mathcal{J}$-$\lim_n \sum_{k\in E}|a_{n,k}|=0$.

%Note that if $\sigma$ is a permutation of $\mathbf{N}$ then conditions \ref{item:T1} and \ref{item:T2} are trivially verified for the matrix $A_\sigma$. In addition, \ref{item:T3} is equivalent to \ref{item:P3}. To this aim, fix $E \in \mathcal{I}$ and note that the sequence $x$ defined by $x_n=\sum_{k \in E}|a_{n,k}|$ is $\{0,1\}$-valued. Therefore $\mathcal{J}\text{-}\lim x=0$ if and only if $\{n\in \mathbf{N}: x_n=1\}  \in \mathcal{J}$, which can be rewritten as 
%$$
%\textstyle 
%\sigma(E)=\{n\in \mathbf{N}: \exists k \in E, \sigma^{-1}(n)=k\}=\{n\in \mathbf{N}: \exists k \in E, a_{n,k}=1\} \in \mathcal{J}.
%$$
%It is easily seen that, if $A_\sigma$ is $(\mathcal{I}, \mathcal{J})$-regular, then condition \ref{item:T3} holds. 
%To conclude, the equivalences follow by Theorem \ref{thm:mainsilvermantoeplitz} and Lemma \ref{thm:mainc0c0}.
%

\bigskip

For the second part of the statement, assume that $\mathcal{I}$ is the simple density ideal $\mathcal{Z}_g$ such that $n/g(n)$ is bounded sequence which, thanks to \cite[Proposition 1]{MR3771234}, is equivalent to $\mathcal{Z}\subseteq \mathcal{Z}_g$.

\ref{item:P3} $\implies$ \ref{item:P4} Suppose that 
$0$ is a $\mathcal{J}$-limit point of $\widehat{\sigma}$, 
hence there exists a subsequence $(\widehat{\sigma}_{n_k})$ such that $\lim_k \widehat{\sigma}_{n_k}=0$ and $S:=\{n_k: k \in \mathbf{N}\} \notin \mathcal{J}$. To conclude, we claim that $S \in \sigma(\mathcal{I})$, that is, $W:=\{\sigma^{-1}(n_k): k \in \mathbf{N}\} \in \mathcal{I}$. Since $\lim_k n_k/\sigma^{-1}(n_k)=0$, we conclude that for all $c>0$ there exists $k_0$ such that $\sigma^{-1}(n_k) \ge cn_k$ for all $k\ge k_0$. It follows that $W \in \mathcal{Z}\subseteq \mathcal{I}$.

\ref{item:P4} $\implies$ \ref{item:P3} 
It is not difficult to see that $0$ is not a $\mathcal{J}$-statistical limit point of $\widehat{\sigma}$ if and only if 
$$
%\textstyle 
\lim_{\varepsilon \to 0^+}\mathsf{d}^\star_h(E_\varepsilon)=0, 
\quad \text{where} \quad  
E_\varepsilon:=\{n \in \mathbf{N}: \widehat{\sigma}_n<\varepsilon\},
$$
cf. \cite[Theorem 2.2]{MR3883171} or \cite[Theorem 18]{MR3405547}. At this point, fix $S \subseteq \mathbf{N}$ such that $\sigma(S) \notin \mathcal{J}$. 
We will show that $S \notin \mathcal{I}$. 
Note that if $n \in [1,m]\setminus E_\varepsilon$ then $\sigma^{-1}(n)\le n/\varepsilon \le m/\varepsilon$. Since
$$
\forall \varepsilon>0, \forall m \in \mathbf{N},\quad 
\sigma^{-1}(\sigma(S) \cap [1,m]\setminus E_\varepsilon) \subseteq 
S \cap [1,m/\varepsilon],
$$
we obtain that
$$
\forall \varepsilon>0, \forall m \in \mathbf{N},\quad 
 \frac{|\sigma(S) \cap [1,m]\setminus E_\varepsilon|}{h(m)} \le 
\frac{|S \cap [1,m/\varepsilon]|}{g(\lfloor m/\varepsilon\rfloor)}\cdot \frac{g(\lfloor m/\varepsilon\rfloor)}{h(m)},
$$
Considering that the function $\mathsf{d}_h^\star$ is monotone and subadditive, we have that
$$
\mathsf{d}_h^\star(\sigma(S)\setminus E_\varepsilon) \ge \mathsf{d}_h^\star(\sigma(S))-\mathsf{d}_h^\star(\sigma(S)\cap E_\varepsilon) \ge \mathsf{d}_h^\star(\sigma(S))-\mathsf{d}_h^\star(E_\varepsilon),
$$
hence there exists $\varepsilon_0>0$ such that 
$\mathsf{d}_h^\star(\sigma(S)\setminus E_{\varepsilon_0})>0$. 
To conclude the proof, since 
$$
c:=\limsup_{m\to \infty} \frac{g(\lfloor m/\varepsilon_0\rfloor)}{h(m)}<\infty
$$
by the the hypothesis \eqref{eq:conditiong1g2}, we obtain that
$$
0<\mathsf{d}_h^\star(\sigma(S)\setminus E_{\varepsilon_0}) \le c\cdot  \limsup_{m\to \infty}\frac{|S \cap [1,m/\varepsilon_0]|}{g(\lfloor m/\varepsilon_0\rfloor )} \le c\cdot \mathsf{d}_g^\star(S),
$$
which implies that $S \notin \mathcal{I}$.
\end{proof}

%%%%%%%%%%%%%%%%%%%%%%%%%%%%%

\section{Concluding Remarks and Open Questions}

It follows by Theorem \ref{thm:mainperm} that, if $\mathcal{J}$ is not a maximal ideal, 
then a permutation matrix belongs to $(c_0(\mathcal{I}), c_0(\mathcal{J}))$ if and only if it belongs to $(c_0(\mathcal{I}) \cap \ell_\infty, c_0(\mathcal{J}) \cap \ell_\infty)$. 
However, this does not hold in general. Indeed, the characterization provided in Theorem \ref{thm:mainc0c0} does not hold for its unbounded analogue $(c_0(\mathcal{I}), c_0(\mathcal{J}))$, that we leave as an open question for the interested reader. 
To this aim, consider the following example.  Let $A=(a_{n,k})$ be the matrix defined by $a_{n,n}=n$ if $n \in S:=\{k^2: k \in \mathbf{N}\}$, $a_{n,n}=1$ if $n\notin S$, and $a_{n,k}=0$ otherwise. Then it is easy to see that $A \in (c_0(\mathcal{Z}), c_0(\mathcal{Z}))$. On the other hand, if $x=\bm{1}_S$, then $Ax \notin \ell_\infty$, which proves that $A \notin (c_0(\mathcal{Z})\cap \ell_\infty, c_0(\mathcal{Z}) \cap \ell_\infty)$. This implies that condition \ref{item:T1} is not necessary for a characterization of the class $(c_0(\mathcal{I}), c_0(\mathcal{J}))$.

Lastly, we also leave as an open question to characterize the class of bounded [resp. unbounded] $(\mathcal{I}, \mathcal{J})$-conservative matrices, that is, the set of matrices $A \in (c(\mathcal{I}) \cap \ell_\infty, c(\mathcal{J}) \cap \ell_\infty)$ [resp., $A \in (c(\mathcal{I}) , c(\mathcal{J}))$] which do not necessarily preserve the corresponding ideal limits.

\subsection*{Acknowledgments} 
P.L. is grateful to PRIN 2017 (grant 2017CY2NCA) for financial support.

%%%%%%%%%%%%%%%%%%%%%%%%%%%%%%%%%%%%%%%%%%%%%%%%%%%%%%%%%%%%%%%%%%%%%%%
%\nocite{*}
\bibliographystyle{amsplain}
%\bibliography{ideale}

\end{document}